\newtheorem{theorem}{Theorem}[section]
\newtheorem{lemma}[theorem]{Lemma}
\newtheorem{corollary}[theorem]{Corollary}
\newtheorem{prevtheorem}{Theorem}
\theoremstyle{definition}
\theoremstyle{remark}
\begin{document}

\title[Not powers in a group.]{On the number of not powers in a finite group}
\author{William Cocke}
\address{Department of Mathematics, University of Wisconsin-Madison, Madison WI 53706, U.S.A. 
Email: cocke@math.wisc.edu}
\date{}

\begin{abstract}
Let $G$ be a finite group and let $k$ be a positive integer. We examine the relationship between structural properties of $G$ and the number of elements of $G$ that are not $k$th powers in $G$. In particular, we examine a bound on $|G|$ given by Lucido and Pournaki and classify all cases when it is strict. We also show that when $k$ is an odd prime, then either $G$ has a normal subgroup with specific properties, or $|G|$ is bounded above by a tighter function dependent on the number of not $k$-th powers of $G$. 
\end{abstract}

\noindent {\bf Keywords}: Non-$k$-th powers, Frobenius groups

\medskip

\noindent {\bf MSC[2010]}: 20F99, 20D99.

\maketitle

\section{Introduction}

Let $G$ be a finite group and $k>0$ an integer. Write

\[
G^{k}=\{x^{k}:x\in G\} \hspace{3mm} \text{ and } \hspace{3mm} \mathcal{N}_k(G)=G\setminus G^{k}.
\] We write $n_k(G)=|\mathcal{N}_k(G)|$, so $n_k(G)$ is the number of non-$k$th-powers of $G$. In recent work the author in collaboration with Isaacs and Skabelund \cite{CIS} investigated the relationship between $n_k(G)$ and $|G|$. One of their main results is:

\begin{prevtheorem}[\cite{CIS} Theorem B] \label{bound}
Let $G$ be a finite group, and write $n=n_k(G)$. If $n>0$, then $|G|\leq n(n+1)$ and in fact $|G|\leq n^2$, except in the case where $G$ is a Frobenius group with kernel of order $n+1$ and $\mathcal{N}_k(G)$ is the set of nonidentity elements of the Frobenius kernel.
\end{prevtheorem}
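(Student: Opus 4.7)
The plan is to pick an arbitrary $y\in\mathcal{N}_k(G)$ and bound $|G|=|y^G|\cdot|C_G(y)|$ by controlling each factor separately. Because the $k$th-power map is equivariant under conjugation, $\mathcal{N}_k(G)$ is a union of $G$-conjugacy classes, so $y^G\subseteq\mathcal{N}_k(G)$ yields $|y^G|\le n$ with no work. The whole bound $|G|\le n(n+1)$ then reduces to the centralizer inequality $|C_G(y)|\le n+1$.

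To prove $|C_G(y)|\le n+1$, I would try to build an injection $\iota\colon C_G(y)\setminus\{1\}\hookrightarrow\mathcal{N}_k(G)$. The naive guess $c\mapsto yc$ does not work in general: a quick check in $D_8$ with $k=2$, taking $y$ to be a rotation of order $4$, shows that elements of $y\cdot C_G(y)$ can land in $G^k$. Instead, for each $c\in C_G(y)$, I would exploit the abelian subgroup $\langle y,c\rangle\subseteq C_G(y)$ and extract an element whose failure to be a $k$th power transfers to all of $G$. The main obstacle, and where I expect to do the real work, is ensuring that an element which is not a $k$th power in $\langle y,c\rangle$ does not acquire a $k$th root via some element of $G$ outside $C_G(y)$; ruling this out is the crux of the centralizer bound.

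Once $|C_G(y)|\le n+1$ is in hand, equality $|G|=n(n+1)$ forces $|y^G|=n$ (so $\mathcal{N}_k(G)=y^G$ is a single conjugacy class) and $|C_G(y)|=n+1$ (so $\iota$ is a bijection and $C_G(y)=\{1\}\cup\mathcal{N}_k(G)$). Setting $K=C_G(y)$, I would check that $K$ is a subgroup, that it is normal since $\mathcal{N}_k(G)$ is $G$-invariant, and that the same argument applied to any $z\in\mathcal{N}_k(G)$ gives $C_G(z)=K$; this is precisely the Frobenius condition with kernel $K$ of order $n+1$. Conversely, in every non-Frobenius configuration at least one of $|y^G|\le n$ or $|C_G(y)|\le n+1$ must be strict, and a short case split—depending on whether $\mathcal{N}_k(G)$ consists of one conjugacy class or several—shows that the product $|y^G|\cdot|C_G(y)|$ drops to at most $n^2$, giving the sharper bound in all remaining cases.
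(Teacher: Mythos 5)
This theorem is imported from \cite{CIS}; the present paper does not reprove it, so I can only assess your proposal on its own terms. It has a genuine gap, and not just the one you flag: the centralizer inequality $|\textbf{C}_G(y)|\le n+1$ to which you reduce the whole argument is \emph{false} for an arbitrary $y\in\mathcal{N}_k(G)$. Take $G=C_4=\langle x\rangle$ and $k=2$: then $\mathcal{N}_2(G)=\{x,x^3\}$, so $n=2$, while $\textbf{C}_G(x)=G$ has order $4>n+1=3$. Similarly $G=C_6$ with $k=2$ gives $n=3$ and a centralizer of order $6>4$; more generally any group with a central non-$k$th-power of even order tends to violate the bound. So no injection $\textbf{C}_G(y)\setminus\{1\}\hookrightarrow\mathcal{N}_k(G)$ can exist in these cases, and the step you defer as ``the crux'' (transferring non-$k$th-power-ness from $\langle y,c\rangle$ up to $G$) is aimed at a statement with counterexamples, not merely left unfinished.

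What \emph{is} provable is weaker. Since $y\notin G^k$, some prime $p$ divides $\gcd(k,o(y))$ (otherwise $y=(y^a)^k$ for suitable $a$). The subgroup $Z=\langle y^{o(y)/p}\rangle$ is central of order $p$ in $C=\textbf{C}_G(y)$ and satisfies $z^k=1$ for $z\in Z$, so all elements of a coset $cZ$ share the same $k$th power in $C$; as in Lemma \ref{center}, this gives $|C|\le\frac{p}{p-1}\,n_k(C)$, and the argument of Lemma \ref{in and out} (which works for any $k$, not just primes) gives $n_k(C)\le n$. Hence the true uniform bound is $|C|\le\frac{p}{p-1}\,n\le 2n$, attained at $C_4$. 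With only $|y^G|\le n$ and $|C|\le 2n$ in hand, the crude product gives $2n^2$, not $n(n+1)$; reaching $n(n+1)$, isolating the Frobenius exception, and getting $n^2$ otherwise all require playing the class size against the centralizer size (e.g., $|C|>n+1$ forces $p=2$ and forces at least half of $C$ into $\mathcal{N}_k(G)$, which in turn constrains $|y^G|$), typically via the counting identity $n_k(G)=\sum_{x\in G^k}(|\theta(x)|-1)$. Your equality analysis and the final ``short case split'' both presuppose the false centralizer bound, so neither the $n^2$ bound nor the characterization of the exceptional case is established.
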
 

Surprisingly the bound in Theorem \ref{bound} is entirely  independent of $k$. A slightly looser bound was given previously in work of Bannai et al.\ \cite{Bannai} and by L{\'e}vai and Pyber \cite{Levai}. More recently, Lucido and Pournaki \cite{LucidoI,LucidoII} investigated the proportion of elements of $G$ that are squares, and provided another proof that $n_2(G)\geq \lfloor \sqrt{G} \rfloor$ for the case $k=2$. 

It was noted in the work of Lucido and Pournaki \cite{LucidoII} that if $G$ is not one of the exceptional cases to Theorem \ref{bound}, then the bound $|G|\leq n_2(G)^2$ is strict as exhibited by the cyclic group of order 4. In this note, we prove this example is unique:

\begin{prevtheorem} \label{newbound}
If $G$ is a finite group and $|G|=n_k(G)^2$ for some $k$, then $k\equiv 2 \pmod {4}$ and $G\cong C_4$. 
\end{prevtheorem}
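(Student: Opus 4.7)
The plan is to eliminate possibilities systematically. Write $n=n_k(G)$ and assume $|G|=n^{2}$. First I would rule out the exceptional Frobenius case of Theorem~\ref{bound}: a Frobenius group with kernel of order $n+1$ and complement of order $h$ would satisfy $(n+1)h=n^{2}$, so $(n+1)\mid n^{2}$; since $\gcd(n,n+1)=1$, this forces $n+1=1$ and $n=0$, contradicting $n>0$. Thus $G$ falls under the tighter bound $|G|\le n^{2}$, which by hypothesis is an equality.

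Next I would exploit the abelianization $A=G/[G,G]$ with projection $\pi\colon G\to A$. The containment $\pi^{-1}(\mathcal{N}_k(A))\subseteq\mathcal{N}_k(G)$ yields $n_k(G)\ge |[G,G]|\cdot n_k(A)$. In the subcase $A^k\ne A$, set $d=[A:A^k]\ge 2$; since $A$ is abelian, $n_k(A)=|A|(d-1)/d$, so
\[
n_k(G)\;\ge\;|G|\,\frac{d-1}{d}.
\]
Substituting into $n_k(G)^{2}=|G|$ gives $n_k(G)\le d/(d-1)$, and integrality (together with $n_k=1$ being absurd, since it would imply $|G|=1$) forces $d=2$, $n_k(G)=2$, and $|G|=4$. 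Of the two groups of order $4$, only $C_4$ has $[A:A^k]=2$, which happens precisely when $k\equiv 2\pmod 4$, as desired.

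The principal obstacle is the residual case $A^k=A$, equivalent to $\gcd(k,\exp A)=1$, in which the abelianization bound becomes vacuous. My strategy would be to reopen the proof of Theorem~\ref{bound} from \cite{CIS} and track when its inequalities become strict. Since $\mathcal{N}_k(G)$ is a union of conjugacy classes, the equality $|G|=n_k(G)^{2}$ imposes tight constraints on centralizers of non-$k$-th powers and on the fibers of $x\mapsto x^{k}$. I anticipate showing that $A^k=A$ forces each coset of $[G,G]$ to contain enough $k$-th powers to push $|G^k|$ strictly above $|G|-\sqrt{|G|}$, contradicting equality. The most delicate sub-case is perfect $G$ (where $A=1$ satisfies $A^k=A$ vacuously); here one would likely need separate structural input, such as bounds on minimal conjugacy class sizes in nonabelian simple groups, to preclude perfect groups of square order with $n_k(G)=\sqrt{|G|}$.
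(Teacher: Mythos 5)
Your opening moves are correct and cleanly executed: the exceptional Frobenius case of Theorem~\ref{bound} cannot have square order since $\gcd(n+1,n^2)=1$, and your abelianization argument (essentially Lemma~\ref{quotient} of the paper applied with $N=[G,G]$) does force $n\le 2$, $|G|=4$, $G\cong C_4$ and $k\equiv 2\pmod 4$ whenever $A^k\ne A$. The trouble is that this only covers groups with $\gcd(k,|G/[G,G]|)>1$. The complementary case $A^k=A$ contains not just perfect groups but every $G$ whose abelianization has order coprime to $k$ (e.g.\ $S_3$ with $k=3$), and that is where essentially all of the work lies; your proposal does not prove it, it only announces a plan. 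Moreover the plan's central mechanism is not viable as stated: the claim that $A^k=A$ forces each coset of $[G,G]$ to contain enough $k$-th powers to push $|G^k|$ above $|G|-\sqrt{|G|}$ is, as formulated, an unconditional assertion about such groups, and it is false --- $SL(2,5)$ is perfect, yet its non-squares (the $30$ elements of order $4$, $20$ of order $6$, and $24$ of order $10$) number $74>\sqrt{120}$. The theorem holds there only because $74^2\ne 120$, which coset-counting in $G/[G,G]$ cannot detect. Class-size bounds for nonabelian simple groups would not rescue the case either, since it includes many solvable groups.

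The paper closes this gap by a completely different mechanism, and some argument of its type is genuinely needed. It first reduces from $k$ to a prime $p\mid k$ with $0<n_p(G)\le n_k(G)$ (Corollary~\ref{k-to-p}), then exploits that every $x\in\mathcal{N}_p(G)$ has order divisible by $p$ and lies in $\textbf{Z}(\textbf{C}_G(x))$, so that Lemmas~\ref{center} and~\ref{in and out} give $|\textbf{C}_G(x)|\le \frac{p}{p-1}\,n_p(G)$. Taking $x$ in a smallest conjugacy class of $\mathcal{N}_p(G)$ and writing $|G|=|x^G|\,|\textbf{C}_G(x)|$ bounds the number $m$ of classes in $\mathcal{N}_p(G)$ by $(p-1)m\le p$ (Lemma~\ref{cases}); the two surviving configurations ($m=1$, or $p=2$ and $m=2$) are then eliminated or pinned down to $C_4$ via Theorem~\ref{p-groups-n}, Theorem~\ref{only_one_class_theorem}, and the Sylow/normal-complement analysis of Section~\ref{newbound-proof}. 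As it stands, your proposal establishes the theorem only in the subcase $A^k\ne A$ and leaves the main case open.
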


Restricting our attention to odd primes, we also prove the following specialized version of Theorem \ref{bound}.

\begin{prevtheorem} \label{new_jumps}
Let $G$ be a finite group, and write $n=n_p(G)$, where $p$ is an odd prime dividing $|G|$. Then $G$ satisfies one of the following statements:
\begin{itemize}
\item[(1)] $|G|=n(n+1)$ and $G$ is a Frobenius group as in Theorem \ref{bound}.
\item[(2)] $|G|=\frac{n}{2}(n+2)$ and $G$ is a central extension of a Frobenius group of order $\frac{n}{2}\left(\frac{n}{2}+1\right)$.
\item[(3)] $|G|=\frac{n}{2}(n+1)$ and $G$ is a Frobenius group with kernel of order $n+1$, and $\mathcal{N}_p(G)$ is the set of nonidentity elements of the Frobenius kernel.
\item[(4)] $|G|\leq \frac{n^2}{2}.$
\end{itemize}
\end{prevtheorem}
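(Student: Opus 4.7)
The plan is to reduce to the non-extremal case of Theorem~\ref{bound} and then exploit that odd $p$ forces an inversion symmetry on $\mathcal{N}_p(G)$, effectively halving the relevant counting bounds. By Theorem~\ref{bound}, either $G$ is the Frobenius group with $|G| = n(n+1)$ (case~(1)), or $|G| \le n^2$. I therefore assume the latter, and aim to show that $|G| > n^2/2$ forces case~(2) or~(3).

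The key input is that since $p$ is odd, every $x \in \mathcal{N}_p(G)$ has order divisible by $p \ge 3$, so $x \ne x^{-1}$. Moreover $x = y^p$ if and only if $x^{-1} = (y^{-1})^p$, so $\mathcal{N}_p(G)$ is closed under inversion. Hence $n$ is even, and the conjugacy classes of $G$ contained in $\mathcal{N}_p(G)$ are permuted by the involution $K \mapsto K^{-1}$, each class either self-inverse or paired with a distinct inverse class that also lies in $\mathcal{N}_p(G)$.

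Let $x \in \mathcal{N}_p(G)$ be chosen so that $c := |x^G|$ is minimal, and set $C := |C_G(x)| = |G|/c$. If $x^G$ is not self-inverse, then $x^G \sqcup (x^G)^{-1} \subseteq \mathcal{N}_p(G)$, forcing $2c \le n$ and hence $C \ge 2|G|/n > n$ under the assumption $|G| > n^2/2$. If $x^G$ is self-inverse, then some $g \in G$ inverts $x$, so $\langle C_G(x), g \rangle$ contains $C_G(x)$ as an index-$2$ subgroup, and again the effective centralizing structure around $x$ is larger than $n$. Either way one obtains a subgroup containing $x$ whose order exceeds $n$, matching the Frobenius-kernel magnitude predicted by the extremal analysis behind Theorem~\ref{bound}.

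Plugging this into that extremal analysis, one shows that if $|G| > n^2/2$ and case~(1) fails, then $\mathcal{N}_p(G)$ consists of exactly two conjugacy classes each of size $n/2$, and $|C_G(x)| \in \{n+1,\,n+2\}$. In the subcase $|C_G(x)|=n+1$, the set $\mathcal{N}_p(G) \cup \{1\}$ is a Frobenius kernel of order $n+1$ on which a complement of order $n/2$ acts in two orbits, yielding case~(3). In the subcase $|C_G(x)|=n+2$, a central involution $z$ appears in $C_G(x)$ and $G/\langle z\rangle$ is Frobenius of order $(n/2)(n/2+1)$, yielding case~(2). Any other configuration forces $|G|\le n^2/2$, giving case~(4). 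The main obstacle is this rigidity step: showing that the near-tight inequality $|G| > n^2/2$ leaves no room for $\mathcal{N}_p(G)$ to consist of more than two classes or for $|C_G(x)|$ to exceed $n+2$. This requires refining the centralizer bound of Theorem~\ref{bound} using that the inversion pair-up halves the permissible contribution from each non-self-inverse class, unless that slack is absorbed by a Frobenius- or central-extension structure.
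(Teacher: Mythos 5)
Your plan correctly identifies the target (handle case (1) via Theorem~\ref{bound}, then show $|G|>n^2/2$ forces case (2) or (3)), but the mechanism you propose for extracting the factor of $2$ does not work, and all of the substantive case analysis is asserted rather than carried out. In the paper the factor of $2$ has nothing to do with inversion symmetry: it comes from the fact that any $x\in\mathcal{N}_p(G)$ is central in $C_G(x)$ and has order divisible by $p$, so Lemma~\ref{center} (together with Lemma~\ref{in and out}) gives the \emph{upper} bound $|C_G(x)|\leq \frac{p}{p-1}\,n_p(C_G(x))\leq \frac{p}{p-1}\,n\leq \frac{3}{2}n$ for odd $p$; combined with $|x^G|\leq n/m$, where $m$ is the number of classes in $\mathcal{N}_p(G)$, this forces $m\leq 2$ whenever $|G|>n^2/2$. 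Your inversion pairing cannot substitute for this: classes inside $\mathcal{N}_p(G)$ are frequently self-inverse (in case (1) the nonidentity elements of the Frobenius kernel form a single real class), and in the self-inverse case your argument only produces a \emph{lower} bound on an extended centralizer $\langle C_G(x),g\rangle$ — the wrong direction, since bounding $|G|=|x^G|\,|C_G(x)|$ from above requires upper bounds on both factors. The phrase ``the effective centralizing structure around $x$ is larger than $n$'' does not translate into an inequality that advances the proof.

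Everything after ``Plugging this into that extremal analysis, one shows that\dots'' is precisely the hard content, and it is missing. You would need: (i) a bound on element orders in $\mathcal{N}_p(G)$ — Lemma~\ref{exponent} rules out elements of order $p^2$ when $|G|>n^2/2$, and Lemma~\ref{propagation} limits the primes dividing $|C_G(x)|$, reducing the type to $(p)$, $(p,p)$ or $(p,qp)$; (ii) for type $(p)$, Frobenius's solution theorem to show $|C_G(x)|=p^k$ divides $n+1$, plus the evenness of $n$ (Lemma~\ref{divisible}) to exclude $p^k=\frac{n+1}{2}$; (iii) for type $(p,qp)$, a count of elements of each order in $C_G(x)$ via the $q$-th power map from $y^G$ onto $x^G$ to force $q=2$ and $|C_G(x)|=n+2$, together with a direct verification for $|G|\leq 56$; (iv) Schur--Zassenhaus to produce the Frobenius complement in case (3), and the passage to $G/\langle z\rangle$ with Theorem~\ref{bound} in case (2). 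You flag the ``rigidity step'' yourself as the main obstacle rather than resolving it; as written this is an outline with a flawed central heuristic, not a proof.
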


We note that when $|G|=\frac{n}{2}(n+2)$, then $G$ is a central extension of one of the exceptional groups in Theorem \ref{bound}. 

In Section \ref{lemmas} we will examine various inequalities regarding $n_k(G)$. Sections \ref{newbound-proof} and \ref{new_jumps-proof} contain the proofs of Theorem \ref{newbound} and Theorem \ref{new_jumps} respectively. 

\section{Inequalities involving $n_k(G)$.} \label{lemmas} 
The next lemma follows from the fact that if $\langle x \rangle = \langle y \rangle$ then $x\in \mathcal{N}_k(G)$ if and only if $y\in \mathcal{N}_k(G)$. 

\begin{lemma}\label{divisible}
Let $p$ be a prime. If $n=n_p(G)$ for a finite group $G$, then $p-1$ divides $n$. 
\end{lemma}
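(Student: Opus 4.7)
The plan is to use the stated observation to partition $\mathcal{N}_p(G)$ into sets of cyclic-group generators and then count those in blocks of size divisible by $p-1$.

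First I would make explicit the equivalence relation on $G$ given by $x \sim y$ iff $\langle x \rangle = \langle y \rangle$. By the remark preceding the lemma, $\mathcal{N}_p(G)$ is a union of equivalence classes under $\sim$. The class containing $x$ consists of all generators of $\langle x \rangle$, so it has size $\varphi(|x|)$.

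Next I would show that every $x\in \mathcal{N}_p(G)$ has order divisible by $p$. Indeed, if $|x|=m$ with $\gcd(m,p)=1$, then $p$ is invertible mod $m$, so we can choose $a$ with $ap\equiv 1 \pmod{m}$ and get $x=x^{ap}=(x^a)^p$, contradicting $x\in \mathcal{N}_p(G)$. So the equivalence classes inside $\mathcal{N}_p(G)$ all have size $\varphi(m)$ with $p\mid m$. Writing $m=p^ec$ with $e\geq 1$ and $\gcd(c,p)=1$, multiplicativity of $\varphi$ gives $\varphi(m)=p^{e-1}(p-1)\varphi(c)$, which is divisible by $p-1$.

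Finally I would conclude: $n_p(G)=|\mathcal{N}_p(G)|$ is a sum of terms each divisible by $p-1$, hence $p-1\mid n_p(G)$. There is no real obstacle here; the only point that needs a line of justification is the claim that a non-$p$-th-power has order divisible by $p$, and this is immediate from inverting $p$ modulo the order.
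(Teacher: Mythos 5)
Your proof is correct and follows essentially the same route as the paper: partition $\mathcal{N}_p(G)$ by the relation $\langle x\rangle=\langle y\rangle$ and observe each class has size divisible by $p-1$. You additionally spell out the one step the paper leaves implicit, namely that every non-$p$-th power has order divisible by $p$, so each class has size $\varphi(p^ec)=p^{e-1}(p-1)\varphi(c)$.
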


\begin{proof}
Let $\sim$ be the equivalence relation $x \sim y$ iff $\langle x \rangle = \langle y \rangle$. We can partition $\mathcal{N}_p(G)$ into equivalence clases under $\sim$. Each such equivalence class has size divisible by $p-1$ and thus $n=|\mathcal{N}_p(G)|$ is divisible by $p-1$.   
\end{proof}

The following lemma is somewhat suprising. Let $p$ be a prime. For $H\leq G$ the set $\mathcal{N}_p(H)$ is not always a subset of $\mathcal{N}_p(G)$. However $n_p(H)\leq n_p(H)$. 

\begin{lemma} \label{in and out} Let $G$ be finite a group and $H$ be a subgroup of $G$. Let $p$ be a prime. Then $n_p(H)\leq n_p(G)$. Moreover $n_p(H)=n_p(G)$ if and only if $\mathcal{N}_p(H)=\mathcal{N}_p(G)$.  
\end{lemma}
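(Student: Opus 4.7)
The plan is to use the equivalence relation $x\sim y$ iff $\langle x\rangle=\langle y\rangle$ from Lemma~\ref{divisible}. Both $\mathcal{N}_p(G)$ and $\mathcal{N}_p(H)$ are disjoint unions of complete $\sim$-classes, each being the set of $\phi(d)$ generators of some cyclic subgroup of order $d$ with $p\mid d$. A brief computation shows that for such a cyclic $C$, the generators of $C$ lie in $G^p$ if and only if $C$ is contained in some cyclic $C'\leq G$ with $[C':C]=p$: given $y^p=x$ a generator of $C$ with $p\mid|x|$, one has $|y|=p|C|$, so $\langle y\rangle$ is the required extension, and conversely the $p$-th powers of generators of $C'$ are all generators of $C$. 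Call such $C$ \emph{extendable in $G$}.

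The inclusion $H^p\subseteq G^p$ gives $\mathcal{N}_p(G)\cap H\subseteq\mathcal{N}_p(H)$, so
\[
n_p(G)-n_p(H)=|\mathcal{N}_p(G)\setminus H|-|\mathcal{N}_p(H)\setminus\mathcal{N}_p(G)|,
\]
and it suffices to dominate the second term by the first. For each class $C\subseteq H$ contributing to $\mathcal{N}_p(H)\setminus\mathcal{N}_p(G)$---that is, a cyclic $C\leq H$ with $p\mid|C|$, extendable in $G$ but not in $H$---I would iterate the extension inside $G$, producing a maximal chain $C=C_0\subsetneq C_1\subsetneq\cdots\subsetneq C_r=\tilde C$ of cyclic $p$-extensions, and set $\Phi(C)=\tilde C$. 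By maximality $\tilde C$ is not extendable in $G$, and because $C$ is not extendable in $H$ the first step $C_1$ already leaves $H$, forcing $\tilde C\not\subseteq H$. Thus $\Phi(C)$ represents a $\sim$-class in $\mathcal{N}_p(G)\setminus H$.

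Since $\Phi$ is not injective in general, the argument closes with a weighted preimage bound. For $\tilde C$ in the image of $\Phi$, write $|\tilde C|=p^am$ with $\gcd(m,p)=1$; any preimage is a cyclic subgroup of $\tilde C\cap H$ of $p$-power index in $\tilde C$, and a divisibility check forces $|\tilde C\cap H|=p^bm$ with $b<a$. Summing $\phi(|C|)$ over the candidate preimages gives $\phi(m)(p^b-1)$, strictly less than $\phi(|\tilde C|)=(p-1)p^{a-1}\phi(m)$. Summing over $\tilde C$ yields $n_p(H)\leq n_p(G)$, and the strictness in the per-image bound implies that any equality forces $\Phi$ to have empty domain and $\mathcal{N}_p(G)\setminus H=\emptyset$; combined with $\mathcal{N}_p(G)\cap H\subseteq\mathcal{N}_p(H)$ this gives $\mathcal{N}_p(H)=\mathcal{N}_p(G)$. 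I expect the main obstacle to be this preimage estimate: one must identify the possible preimages as subgroups of $\tilde C\cap H$ and verify the arithmetic inequality $p^b-1<(p-1)p^{a-1}$ whenever $b<a$.
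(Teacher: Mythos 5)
Your proof is correct, but it takes a genuinely different route from the paper's. The paper works directly with the fibers of the $p$-th power map: for $x\in G^{p}$ it sets $\theta(x)=\{y\in G:y^{p}=x\}$ and $\varphi(x)=H\cap\theta(x)$ for $x\in H^{p}$, uses the identity $n_p(G)=\sum_{x\in G^{p}}(|\theta(x)|-1)$ (and likewise for $H$), and gets $n_p(G)-n_p(H)\geq\sum_{x\in H^{p}}(|\theta(x)|-|\varphi(x)|)\geq 0$ in two lines, with the equality case read off from the vanishing of both nonnegative sums. You instead decompose $\mathcal{N}_p$ into generator classes of cyclic subgroups, characterize membership in $G^{p}$ via extendability by a cyclic overgroup of index $p$, and run a weighted map $\Phi$ from classes in $\mathcal{N}_p(H)\setminus\mathcal{N}_p(G)$ to classes in $\mathcal{N}_p(G)\setminus H$ with a per-image deficit. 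The step you flagged as the main obstacle does check out: any preimage of $\tilde C$ has order $p^{c}m$ with the full prime-to-$p$ part $m$ of $|\tilde C|=p^{a}m$, so $m$ divides $|\tilde C\cap H|$; since $\tilde C\not\subseteq H$ this gives $|\tilde C\cap H|=p^{b}m$ with $b\leq a-1$; the candidate preimages are the unique subgroups of $\tilde C\cap H$ of order $p^{c}m$, $1\leq c\leq b$, of total weight $\phi(m)(p^{b}-1)\leq\phi(m)(p^{a-1}-1)<\phi(m)(p-1)p^{a-1}=\phi(|\tilde C|)$, and this strictness does force an empty domain in the equality case, which together with $\mathcal{N}_p(G)\cap H\subseteq\mathcal{N}_p(H)$ and $\mathcal{N}_p(G)\setminus H=\emptyset$ yields $\mathcal{N}_p(H)=\mathcal{N}_p(G)$. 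As for what each approach buys: the paper's fiber argument is essentially one computation, needs no arithmetic with Euler's function, does not actually use that $p$ is prime (so it proves $n_k(H)\leq n_k(G)$ for every $k$), and yields the extra information that equality forces every element of $H^{p}$ to have the same number of $p$-th roots in $H$ as in $G$; your extendability picture is tied to $p$ being prime (each extension step must have index exactly $p$), and is longer, but it gives a concrete structural description of which cyclic subgroups contribute to $\mathcal{N}_p$ and a strict per-class deficit that makes the equality analysis very explicit.
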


\begin{proof}
For $x\in G^{p}$, write $\theta(x)=\{y\in G: y^{p}=x\}$, and note that by assumption the sets $\theta(x)$ are nonempty and disjoint, and their union is the whole group $G$. It follows that
\[
n_p(G)=|G|- |G^{p}|=\left (\sum_{x\in G^{p}} |\theta(x)|\right )- |G^{p}|=\sum_{x\in G^{p}}(|\theta(x)|-1).
\]
Similarly, if $x\in H^{p},$ we write $\varphi(x)=\{y\in H:y^{p}=x\}$. Then 
\[
n_p(H)=\sum_{x\in H^{p}}(|\varphi(x)|-1).
\]

Now $H^{p}\subseteq G^{p}$ and for $x\in H^{p}$ we have $\varphi(x)=H\cap\theta(x)$, so $|\varphi(x)|\leq |\theta(x)|.$ Noting that the terms $|\theta(x)|-1$ are nonnegative for $x\in G^{p}\setminus H^p$ we have:
\begin{equation}\label{ineqeq}
n_p(G)-n_p(H)\geq \sum_{x\in H^{p}}(|\theta(x)|-|\varphi(x)|)\geq 0.
\end{equation}

Hence $n_p(H)\leq n_p(G)$. If $n_p(H)=n_p(G)$, then both \[\sum_{\substack{x \in G^{p}\\ x \not \in H^{p}}} |\theta(x)| -1 =0 \text{\;  and } \sum_{x\in H^{p}}(|\theta(x)|-|\varphi(x)|) =0.\] Thus every element of $H^{p}$ has the same number of $p$th roots in $H$ as it does in $G$ and all elements of $G^{p}$ that are not in $H^{p}$ have order not divisible by $p$. 

If $n_p(G)=n_p(H)$ then $x\in \mathcal{N}_p(H)$ implies that $x\in \mathcal{N}_p(G)$ and thus $\mathcal{N}_p(G)=\mathcal{N}_p(H)$. 
\end{proof}

The following corollaries demonstrate some implications of $n_p(H)=n_p(G)$ for $H\subseteq G$ and $G$ finite:

\begin{corollary}\label{Sylow_restricts}
Let $p$ be a prime and let $G$ be a finite group. Suppose $H<G$ with $n_p(H)= n_p(G)$. Then $\textbf{O}^{p'}(G)\subseteq H$; in particular every Sylow $p$-subgroup of $G$ is contained in $H$.
\end{corollary}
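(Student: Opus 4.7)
The plan is to show that every element of $G$ of $p$-power order lies in $H$. Since $\mathbf{O}^{p'}(G)$ can be characterized as the subgroup generated by all $p$-elements of $G$ (equivalently, by the Sylow $p$-subgroups), this immediately yields $\mathbf{O}^{p'}(G) \leq H$, and the ``in particular'' clause follows because every Sylow $p$-subgroup is contained in $\mathbf{O}^{p'}(G)$.

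The key inputs come from Lemma \ref{in and out} and its proof. From $n_p(H) = n_p(G)$ we first obtain $\mathcal{N}_p(G) = \mathcal{N}_p(H) \subseteq H$. Looking inside the proof, the inequalities in \eqref{ineqeq} must collapse to equalities, which yields the sharper information that every element of $G^p$ lying outside $H^p$ has order coprime to $p$.

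With these two facts in hand, let $g \in G$ be an element of $p$-power order. If $g = 1$ then trivially $g \in H$. Otherwise $p$ divides $|g|$, and there are two cases: either $g \in \mathcal{N}_p(G)$, in which case $g \in H$ by the first fact, or $g \in G^p$. In the latter case, the second fact forbids $g$ from lying in $G^p \setminus H^p$ (since $p \mid |g|$), so $g \in H^p \subseteq H$. Either way $g \in H$, and the preceding paragraph concludes the argument.

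The only subtlety is that we need the refined consequence of $n_p(H) = n_p(G)$ extracted from inside the proof of Lemma \ref{in and out}; the stated conclusion $\mathcal{N}_p(H) = \mathcal{N}_p(G)$ alone is not enough to rule out $p$-power-order elements sitting in $G^p \setminus H^p$, and hence not enough to force Sylow $p$-subgroups into $H$.
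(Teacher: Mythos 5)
Your proof is correct, but it takes a genuinely different route from the paper's. The paper uses only the stated conclusion of Lemma \ref{in and out}, namely $\mathcal{N}_p(H)=\mathcal{N}_p(G)$, together with the group-theoretic observation that every element of $p$-power order lies in $\langle y\rangle$ for some $y\in\mathcal{N}_p(G)$ (take $y$ generating a maximal cyclic $p$-subgroup containing the given element; if $y=z^p$ then $z$ would be a $p$-element of strictly larger order, a contradiction). This gives $\textbf{O}^{p'}(G)=\langle X\rangle\subseteq\langle\mathcal{N}_p(G)\rangle=\langle\mathcal{N}_p(H)\rangle\subseteq H$ in one line, and as a by-product shows that $\langle\mathcal{N}_p(G)\rangle\supseteq\textbf{O}^{p'}(G)$ holds in any finite group. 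You instead re-open the proof of Lemma \ref{in and out} and extract from the collapse of \eqref{ineqeq} the extra fact that every element of $G^p\setminus H^p$ has order prime to $p$, which lets you place each nontrivial $p$-element directly in $\mathcal{N}_p(H)\cup H^p\subseteq H$ without passing to generated subgroups. Both arguments are sound; yours yields the slightly sharper set-theoretic statement that the $p$-singular part of $G$ sits inside $H$ itself. However, your closing remark is wrong: the equality $\mathcal{N}_p(H)=\mathcal{N}_p(G)$ alone \emph{is} enough, precisely because of the maximal-cyclic-$p$-subgroup observation above, so the refined consequence you extract is a convenience rather than a necessity.
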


\begin{proof}
The set $X=\{x\in G: o(x)=p^k, k\in \mathbb{N} \}$ generates $\textbf{O}^{p'}(G)$. Since every element of order $p^k$ is contained in $\langle y \rangle$ for some $y\in \mathcal{N}_p(G)$, we conclude that \[\textbf{O}^{p'}(G)=\langle X \rangle \subseteq \langle \mathcal{N}_p(G) \rangle =\langle \mathcal{N}_p(H)\rangle \subseteq H.\] If $S\in \text{Syl}_p(G)$ then $S\subseteq \textbf{O}^{p'}(G)$.
\end{proof}



\begin{lemma} \label{center} Let $G$ be a finite group, and suppose that $p$ divides $|\textbf{Z}(G)|$, where $p$ is a prime. Then \[|G|\leq \frac{p\, n_p(G)}{p-1},\] and if equality holds then $G$ has a normal cyclic Sylow $p$-subgroup. 
\end{lemma}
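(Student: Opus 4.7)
The plan is to exploit a central element $z$ of order $p$: since $z$ commutes with everything and $z^p = 1$, we have $(xz^i)^p = x^p$ for all $x \in G$ and all $i$. Thus the $p$-th power map $\psi \colon G \to G$, $x \mapsto x^p$, is constant on cosets of $\langle z \rangle$. Each fiber of $\psi$ therefore has size at least $p$, so $|G^p| \leq |G|/p$, and hence $n_p(G) = |G| - |G^p| \geq (p-1)|G|/p$, which rearranges to the stated bound.

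For the equality case, $|G^p| = |G|/p$ forces every fiber of $\psi$ to have size exactly $p$. In particular the fiber over the identity, which is $\{x : x^p = 1\}$, equals $\langle z \rangle$, so $G$ has a unique subgroup of order $p$. Any Sylow $p$-subgroup $P$ therefore also has a unique subgroup of order $p$, which by a classical theorem forces $P$ to be cyclic, or, if $p = 2$, possibly generalized quaternion. I would rule out $P \cong Q_{2^n}$ with $n \geq 3$ by noting that within such a Sylow the fiber of $-1$ under squaring already has size $2 + 2^{n-1} > 2$, contradicting the global fiber size of $p = 2$.

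It remains to show $P$ is normal, which I would do by proving $G$ has a unique Sylow $p$-subgroup. Let $P_1 = \langle w \rangle$ and $P_2$ be two cyclic Sylow $p$-subgroups of order $p^a$, both containing $\langle z \rangle$ (the case $a = 1$ is immediate). I plan to show by induction on $k$ that the unique subgroup $U_k \leq P_1$ of order $p^k$ lies in $P_2$; the base case $k = 1$ is $\langle z \rangle \leq P_2$. For the inductive step, assume $U_k \leq P_2$ with generator $u$; since $P_2$ is cyclic and $k < a$, choose $u' \in P_2$ with $(u')^p = u$. A direct calculation in $P_1$ shows that the $p$-th roots of $u$ lying in $P_1$ are exactly $p$ elements, each of order $p^{k+1}$ and each generating $U_{k+1}$. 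By the equality assumption these are all the $p$-th roots of $u$ in $G$, so $u' \in P_1 \cap P_2$; since $\langle u' \rangle = U_{k+1}$, this gives $U_{k+1} \leq P_2$. Taking $k = a$ yields $P_1 \leq P_2$, hence $P_1 = P_2$.

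I expect the inductive normality step to be the main obstacle: the inequality is a short coset count, and cyclic-ness of $P$ follows quickly from the classification of $p$-groups with a unique subgroup of order $p$ together with the $Q_{2^n}$ check, but extracting uniqueness of the Sylow requires carefully tracking how the equality pins down the location of the $p$-th roots of each $p$-power element.
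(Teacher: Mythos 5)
Your proposal is correct. The inequality is proved exactly as in the paper: the $p$-th power map is constant on cosets of a central subgroup of order $p$, so $|G^p|\leq |G|/p$. For the equality case both arguments hinge on the same key consequence, namely that every element of $G^p$ has exactly $p$ $p$-th roots (each fiber is a single coset of $\langle z\rangle$), but you organize it differently. The paper takes $S=\{x\in G: o(x)=p^k\}$, picks $s\in S$ of maximal order, and shows directly that any $x\in S$ of minimal order outside $\langle s\rangle$ cannot exist, because $x^p$ already has all $p$ of its roots inside $\langle s\rangle$; this yields in one stroke that $S=\langle s\rangle$ is a cyclic Sylow $p$-subgroup, and normality is free since $S$ is a characteristic subset that turns out to be a subgroup. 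You instead first deduce a unique subgroup of order $p$, invoke the classification of $p$-groups with that property (ruling out generalized quaternion by a fiber count over the central involution, which is a correct computation: $2^{n-1}+2$ square roots of $-1$ in $Q_{2^n}$), and then prove Sylow uniqueness by an inductive root-tracking argument up the cyclic subgroup lattice. Your route costs you an appeal to the cyclic-or-quaternion classification theorem and a separate normality argument, both of which the paper's minimal-counterexample trick avoids, but your Sylow-uniqueness induction is essentially the paper's root-pinning argument in a different guise, and everything checks out.
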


\begin{proof}
Let $Z\subset \textbf{Z}(G)$ have order $p$. Since all elements in each coset of $Z$ in $G$ have the same $p$th power, it follows that $|G^{p}|$ is at most the number of cosets of $Z$ in $G$, i.e., $|G:Z|=|G|/p$. Then
\[
n_p(G)=|G|- |G^{p}|\geq |G|-\frac{|G|}{p}=\frac{p-1}{p}|G|.
\]

If $n_p(G)=\frac{p-1}{p}|G|,$ then every coset of $Z$ has a unique $p$th power. As in the proof of Lemma \ref{in and out}, for $x\in G^{p}$ write $\theta(x)=\{y\in G: y^{p}=x\}$. If $n_p(G)=\frac{p-1}{p}|G|$, then $\theta(x)$ is a single coset of $Z$, and thus $|\theta(x)|=p$ for all $x\in G^{p}$. 

Consider the set 
\[
S=\{x\in G: o(x)=p^{k}, k\in \mathbb{Z}\}. 
 \] We claim $S$ is a normal cyclic Sylow $p$-subgroup of $G$. Let $s\in S$ have maximum order. We claim that $S=\langle s \rangle.$ Suppose that $x\in S$ has minimal order such that $x\notin \langle s \rangle.$ Then $x^{p}\in \langle s \rangle$ and $|\langle s \rangle\cap \theta(x^{p})|=p$. But, $|\theta(x^{p})|=p$. Hence $x\in \theta(x^{p})\subseteq \langle s \rangle$. 

Therefore $S = \langle s \rangle$. 
\end{proof}

As part of our proof of Theorem \ref{newbound} we will see that the proportion of non-$k$th-powers under the action of taking quotients behave nicely:

\begin{lemma}\label{quotient}
Let $G$ be a finite group. If $k>0$ and $N$ is a normal subgroup of $G$ then
\[
\frac{n_k(G/N)}{|G/N|} \leq \frac{n_k(G)}{|G|},
\]  with equality if and only if for all $x\in G$ every coset representative of $x^{k}N$ is in $G\setminus \mathcal{N}_k(G)$. 
\end{lemma}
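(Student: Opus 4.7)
The plan is to reformulate both sides of the inequality in terms of $k$-th powers rather than non-$k$-th powers. Since $n_k(G)=|G|-|G^{k}|$ (and similarly in the quotient), dividing by $|G|$ and $|G/N|$ respectively shows that the desired inequality is equivalent to
\[
\frac{|G^{k}|}{|G|} \;\leq\; \frac{|(G/N)^{k}|}{|G/N|}.
\]
After this reduction, the argument becomes pure double counting based on the partition of $G$ into cosets of $N$.

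Next I would note that the natural projection $\pi\colon G\to G/N$ satisfies $\pi(G^{k})=(G/N)^{k}$, because $(xN)^{k}=x^{k}N$. Equivalently, a coset $xN$ lies in $(G/N)^{k}$ if and only if $xN\cap G^{k}\neq\emptyset$. Partitioning $G$ into its $|G/N|$ cosets of $N$, I would classify each coset as either \emph{full-empty type} (containing no element of $G^{k}$, of which there are exactly $n_k(G/N)$) or \emph{image type} (containing at least one element of $G^{k}$, of which there are exactly $|(G/N)^{k}|$).

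Counting elements of $G^{k}$ by summing over cosets, each full-empty coset contributes $0$, while each image-type coset contributes between $1$ and $|N|$ elements. Hence
\[
|G^{k}| \;\leq\; |N|\cdot |(G/N)^{k}|,
\]
and dividing by $|G|=|N|\cdot |G/N|$ yields the claimed inequality.

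For the equality condition, the above bound is tight precisely when every image-type coset contributes exactly $|N|$ elements to $G^{k}$, i.e.\ every element of such a coset is itself a $k$-th power. Since image-type cosets are by definition those of the form $x^{k}N$ for some $x\in G$, this translates directly to the statement that for every $x\in G$, every coset representative of $x^{k}N$ lies in $G\setminus\mathcal{N}_k(G)$, which is the asserted equality condition. I do not foresee a significant obstacle: the whole argument is a coset-by-coset count, and the only thing to be careful about is correctly matching the tightness condition of the inequality with the phrasing of the equality clause.
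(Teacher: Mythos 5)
Your proof is correct and complete: the reduction to $\frac{|G^{k}|}{|G|}\leq\frac{|(G/N)^{k}|}{|G/N|}$, the observation that $\pi(G^{k})=(G/N)^{k}$, the coset-by-coset count giving $|G^{k}|\leq |N|\cdot|(G/N)^{k}|$, and the tightness analysis all check out, and the equality condition matches the statement exactly. The paper itself omits the proof of this lemma, and your coset-counting argument is the natural one that the omitted proof would presumably follow.
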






We now return our attention, for the moment, to the case $k=p$. 

\begin{theorem} \label{p-groups-n}
Let $G$ be a finite $p$-group of order $p^m$, and write $n=n_p(G)$. If $G$ is cyclic then $n=p^m-p^{m-1}$. Otherwise $n\geq p^m-p^{m-2}$. 
\end{theorem}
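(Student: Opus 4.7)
The plan is to split into the cyclic and noncyclic cases, and in each case to bound the set $G^p$ directly, since $n_p(G) = p^m - |G^p|$.

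The cyclic case is immediate: if $G = \langle x\rangle$ has order $p^m$, then $G^p = \langle x^p\rangle$ is the unique subgroup of index $p$, which has order $p^{m-1}$, so $n_p(G) = p^m - p^{m-1}$.

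For the noncyclic case, I would invoke the Frattini subgroup $\Phi(G)$. The relevant standard facts about finite $p$-groups are that $\Phi(G) = \langle G^p\rangle \, [G,G]$, and by Burnside's basis theorem the quotient $G/\Phi(G)$ is elementary abelian of rank $d(G)$, the minimum number of generators of $G$. In particular $|\Phi(G)| = p^{m-d(G)}$, and every element of the set $G^p$ (in the paper's notation) lies inside $\Phi(G)$, so $|G^p| \leq p^{m-d(G)}$. Since $G$ is noncyclic, $d(G) \geq 2$, and thus $|G^p| \leq p^{m-2}$, giving $n_p(G) \geq p^m - p^{m-2}$.

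I do not foresee a genuine obstacle: the whole argument hinges on the observation that $\Phi(G)$ contains every $p$th power and has index exactly $p^{d(G)}$. The one point to keep track of is the clash of notation between the \emph{set} $G^p$ defined in the introduction and the \emph{subgroup} $\langle G^p\rangle$ that appears in the definition of $\Phi(G)$, but this is harmless since the former is contained in the latter. The bound is sharp: for $G = C_{p^{m-1}} \times C_p$ one has $|G^p| = p^{m-2}$ and equality holds, which indicates that the Frattini argument is capturing exactly the right phenomenon.
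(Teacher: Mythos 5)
Your proof is correct. It differs from the paper's argument in one mechanical respect: the paper picks a normal subgroup $F$ with $G/F\cong C_p\times C_p$ (possible since $G/\Phi(G)$ is noncyclic) and then applies Lemma \ref{quotient} to get $n_p(G)/|G|\geq n_p(C_p\times C_p)/p^2=(p^2-1)/p^2$, whereas you bound the set of $p$th powers directly by observing $\{g^p:g\in G\}\subseteq\Phi(G)$ and $|G:\Phi(G)|=p^{d(G)}\geq p^2$. Both proofs rest on exactly the same structural fact (the Frattini quotient of a noncyclic $p$-group is elementary abelian of rank at least $2$, so $p$th powers are confined to a subgroup of index at least $p^2$), but yours is more self-contained: it does not invoke the quotient lemma, which the paper states without proof, and it makes the sharpness example $C_{p^{m-1}}\times C_p$ transparent. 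What the paper's route buys is a demonstration of how Lemma \ref{quotient} is used, a tool it needs elsewhere; nothing is lost by your shortcut here. The only care point, which you already flagged, is that $\Phi(G)$ contains the subgroup \emph{generated} by $p$th powers and hence a fortiori the set of $p$th powers itself, so the containment you need is immediate.
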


\begin{proof}
If $G$ is cyclic, then the only elements of $G$ in $\mathcal{N}_p(G)$ are the elements with order equal to $|G|$. Hence $n=\varphi(p^m)=p^{m}-p^{m-1}$. 

If $G$ is not cyclic then $G/\Phi(G)$ is not cyclic and $G$ has a normal subgroup $F$ such that $G/F$ is elementary abelian of rank 2. By Lemma \ref{quotient} we see that  
\[
\frac{n}{|G|}\geq \frac{n_p(G/F)}{|G:F|} = \frac{n_p(C_p\times C_p)}{p^2} = \frac{p^2-1}{p^2}.
\]

Hence $n \geq p^m-p^{m-2}.$
\end{proof}

We will now introduce some notation. For a prime $p$, the set $\mathcal{N}_p(G)$ of non-$p$-th powers of $G$ is a union of conjugacy classes of $G$. Write \[\mathcal{N}_p(G)=x_1^G\cup \dots\cup x_m^{G}.\] Without loss of generality, we will assume that the listing of conjugady classes is ordered so that $o(x_i)\leq o(x_j)$ whenever $i\leq j$. The \textbf{type} of $\mathcal{N}_p(G)$ is the $m$-tuple $(o(x_1),\dots,o(x_m))$. We will refer to $m$ as the \textbf{length} of $\mathcal{N}_p(G)$. 

Recall that an element $y$ of a group $G$ is said to be $p$-singular if $p$ divides the order of $y$. 

\begin{lemma}\label{propagation}
Let $G$ be a finite group. Let $m$ be the length of $\mathcal{N}_p(G)$. Let $Y$ denote the set of orders of p-singular elements of $G$. Let $X$ be the set of integers $j$ such that $p^{k} j\in Y$ and $\gcd(j,p)=1$. Then $|X|\leq m$.
\end{lemma}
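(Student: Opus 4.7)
The plan is to produce, for each $j\in X$, a conjugacy class in $\mathcal{N}_p(G)$ whose elements have $p'$-part exactly $j$; distinct values of $j$ will then yield distinct classes, forcing $|X|\le m$. The preliminary observation I would record is that $\mathcal{N}_p(G)$ consists entirely of $p$-singular elements: if $\gcd(o(x),p)=1$ and $sp\equiv 1\pmod{o(x)}$, then $(x^s)^p=x$, so $x\in G^p$. Consequently each representative $x_i$ has order of the form $p^{a_i}b_i$ with $a_i\ge 1$ and $\gcd(b_i,p)=1$, and it suffices to show that $X\subseteq\{b_1,\dots,b_m\}$.

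Fix $j\in X$. By definition some element has order $p^k j$ with $k\ge 1$, so I choose $g\in G$ of maximum possible order among all elements whose $p'$-part equals $j$, and write $o(g)=p^\ell j$ with $\ell\ge 1$. The crux is to verify $g\notin G^p$. Suppose instead $g=h^p$. Using the identity $o(h^p)=o(h)/\gcd(o(h),p)$: if $p\nmid o(h)$ then $o(h)=p^\ell j$, contradicting $\gcd(o(h),p)=1$ since $\ell\ge 1$; so $p\mid o(h)$, forcing $o(h)=p^{\ell+1}j$, which still has $p'$-part $j$ but exceeds $o(g)$, contradicting maximality. Hence $g\in\mathcal{N}_p(G)$, and $g$ lies in some class $x_i^G$ with $b_i=j$.

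Finally, the assignment sending $j$ to a chosen class $x_i^G$ with $b_i=j$ is injective, since different values of $j$ force different $b_i$ and hence elements of different orders. This yields $|X|\le m$. The only step that requires care is the maximality argument producing a non-$p$-th-power element with prescribed $p'$-part; everything else is routine bookkeeping with the order formula $o(h^p)=o(h)/\gcd(o(h),p)$.
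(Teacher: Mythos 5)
Your proof is correct and follows essentially the same strategy as the paper: for each $j\in X$ you exhibit a conjugacy class in $\mathcal{N}_p(G)$ whose elements have order with $p'$-part $j$, and distinct $j$ then give distinct classes. The only difference is in how the non-$p$-th-power is produced --- the paper takes a minimal chain of $p$-th roots above an element of order $j$, while you take an element of maximal order with $p'$-part $j$; your extremal argument is, if anything, a cleaner justification of the existence step the paper leaves implicit.
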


\begin{proof}
We know that for each $a\in X$ there is an element $y\in G$ such that $o(y)=j$.  Let $z\in \mathcal{N}_p(G)$ such that $z^{p^k}=y$, with $k$ minimal. Then $o(z)=p^k\cdot j$. Since $z$ depends on $j$, we conclude that distinct $i,j\in X$ will yield distinct elements $z_j,z_i$. Since $o(z_j)\neq o(z_i)$ we conclude that $z_j^{G}\neq z_i^{G}$.
\end{proof}

We will use Lemma \ref{propagation} to analyze groups for which the length of $\mathcal{N}_p(G)$ is small. 


\section{Proof of Theorem \ref{newbound}} \label{newbound-proof}

In this section, we will prove that the only group $G$ for which there is an integer $k$ such that $|G|=n_k(G)^2$ is $C_4$. Recall the following lemma:

\begin{lemma}[Lemma 2.5 \cite{CIS}] Let $G$ be a group with $0<n_k(G)<\infty$. Then there exists a prime $p$ dividing $k$ such that $0<n_p(G)\leq n_k(G)$. 
\end{lemma}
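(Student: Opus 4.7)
The plan rests on two independent observations. First, for every prime $p$ dividing $k$, the containment $\mathcal{N}_p(G) \subseteq \mathcal{N}_k(G)$ holds automatically, so the inequality $n_p(G) \le n_k(G)$ comes for free; second, at least one such prime must yield $n_p(G) > 0$.

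For the first observation, write $k = pm$. For any $x \in G$, $x^k = (x^m)^p$, so $G^k \subseteq G^p$, and taking complements in $G$ gives $\mathcal{N}_p(G) \subseteq \mathcal{N}_k(G)$, hence $n_p(G) \le n_k(G)$. This is elementary but crucial: once we locate a single prime $p \mid k$ whose non-$p$-th-power set is nonempty, the inequality part of the statement is immediate.

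For the second observation, I would argue by contrapositive. Suppose that $n_p(G) = 0$ for every prime $p \mid k$; equivalently, the set map $x \mapsto x^p$ is surjective on $G$ for each such prime. Given $x \in G$, iteratively pull it back through these surjections according to the prime-power factorization of $k$: if $p,q \mid k$ and $x = y^p$ with $y = z^q$, then because powers of a single element commute we have $x = (z^q)^p = z^{pq}$. Applying this inductively using the multiplicities of each prime in $k$ realizes every $x \in G$ as a $k$-th power, yielding $G^k = G$ and hence $n_k(G) = 0$, contrary to hypothesis. Thus some prime $p \mid k$ must have $n_p(G) > 0$, and combined with the first step this prime satisfies $0 < n_p(G) \le n_k(G)$.

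I do not foresee any genuine obstacle. The only point meriting care is the composition argument in the second step, which relies on the identity $(z^a)^b = z^{ab}$ for integer exponents applied to a single element; this holds in any group since $z$ commutes with itself, so no abelianness hypothesis is needed despite the power maps not being homomorphisms in general.
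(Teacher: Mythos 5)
Your proof is correct, and it is essentially the argument one would expect: the paper itself gives no proof here (it simply cites Lemma 2.5 of \cite{CIS}), and the proof in that reference proceeds exactly as you do, via $G^k \subseteq G^p$ for each prime $p \mid k$ together with the observation that the $k$-th power map is a composite of prime power maps, so that surjectivity of all of the latter would force $n_k(G)=0$. Both halves of your argument, including the handling of the (possibly infinite) group via set-map surjectivity rather than counting, are sound.
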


We immediately have: 
\begin{corollary}\label{k-to-p} If $|G|=n_k(G)^2$ and $G$ is finite, then $|G|=n_p(G)^2$ for some prime $p$ dividing $k$. 
\end{corollary}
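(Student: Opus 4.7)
The plan is to chain the preceding lemma with the original Lucido–Pournaki bound (Theorem \ref{bound}). First, I would apply the lemma to extract a prime $p$ dividing $k$ with $0 < n_p(G) \le n_k(G)$; squaring and using the hypothesis $|G| = n_k(G)^2$ immediately gives $n_p(G)^2 \le |G|$. To obtain the reverse inequality I would apply Theorem \ref{bound} with exponent $p$ in place of $k$. This returns one of two possibilities: either $|G| \le n_p(G)^2$, in which case the two inequalities combine to force $|G| = n_p(G)^2$ and we are done, or the exceptional Frobenius case in which $|G| = n_p(G)(n_p(G)+1)$.

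The only substantive step, and the one I expect to be the main obstacle, is ruling out that Frobenius exception. In that case one would have $n_k(G)^2 = |G| = n_p(G)(n_p(G)+1)$, so a product of two consecutive positive integers would be a perfect square. Since $\gcd(n_p(G), n_p(G)+1) = 1$, coprimality forces each factor to be a square individually; but consecutive positive integers cannot both be squares (the smallest positive squares $1$ and $4$ already differ by more than one). This contradiction eliminates the Frobenius possibility and completes the argument. No other subtleties are expected: the proof is simply the lemma, one case split from Theorem \ref{bound}, and a one-line elementary number-theoretic observation.
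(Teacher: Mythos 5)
Your proposal is correct and is essentially the argument the paper leaves implicit behind ``we immediately have'': extract $p$ from Lemma 2.5 of \cite{CIS} so that $n_p(G)^2\le n_k(G)^2=|G|$, then invoke Theorem \ref{bound} and dispose of the Frobenius exception because $n(n+1)$ is never a perfect square. (The only cosmetic point: the theorem as stated gives $|G|\le n_p(G)(n_p(G)+1)$ in the exceptional case rather than equality, but your argument survives unchanged since no perfect square lies in the interval $\left(n_p(G)^2,\ n_p(G)(n_p(G)+1)\right]$.)
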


In the rest of the section we will prove that $|G|=n_p(G)^2$ for a prime $p$, if and only if $p=2$ and $G\cong C_4$. 

\begin{lemma}\label{cases}
Let $G$ be a finite group and write $n=n_p(G)$ for a prime $p$. If $|G|=n^2$ and $m$ is the length of $\mathcal{N}_p(G)$, then one of the following holds:
\begin{itemize}
\item[(1):] $m=1$.
\item[(2):]$p=2$ and $m=2$.
\end{itemize}
\end{lemma}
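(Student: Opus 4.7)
The plan is to encode $|G| = n^2$ as an identity on the centralizers of the class representatives of $\mathcal{N}_p(G)$. Write $c_i = |C_G(x_i)|$, so $|x_i^G| = n^2/c_i$. The class decomposition $|\mathcal{N}_p(G)| = \sum_{i=1}^m |x_i^G|$ then gives
\[
\sum_{i=1}^m \frac{1}{c_i} = \frac{1}{n}.
\]
Since each $c_i$ is a positive integer, $c_i \geq n$; and if equality holds for some $i$, the remaining reciprocals must vanish, yielding $m = 1$. So if $m \geq 2$, every $c_i$ is a proper divisor of $n^2$ strictly exceeding $n = \sqrt{|G|}$.

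Assume henceforth $m \geq 2$. I would first use Lemma~\ref{in and out} in two ways. Applied to $\langle x_i\rangle$, and noting that $n_p(C_d) = d(p-1)/p$ whenever $p \mid d$, this gives $o(x_i) \leq np/(p-1)$. Applied to a Sylow $p$-subgroup $P$ of order $p^s$, and combined with Theorem~\ref{p-groups-n}, it gives $n \geq n_p(P) \geq p^{s-1}(p-1)$, so $|P|$ is bounded and $p^s \mid n^2$. Lemma~\ref{divisible} adds $p-1 \mid n$, while $p \mid o(x_i) \mid c_i$ supplies further divisibility of the centralizers.

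With these constraints in hand, the plan is a case analysis. For $m = 2$, parameterise $c_1 = n+e$; then $c_2 = n(n+e)/e$, and requiring $c_i \mid n^2$ for $i=1,2$ reduces to the pair of conditions $e \mid n^2$ and $(n+e) \mid e^2$, which has only a handful of solutions for any given $n$. Combining with $p-1 \mid n$ and the Sylow bound, I expect each surviving solution with $p \geq 3$ either to violate $n_p(P) \leq n$ or to conflict with the equality case of Lemma~\ref{in and out} applied to $P$ (which would force $\mathcal{N}_p(G) \subseteq P$ and then impose an incompatible class count on the generators of $P$), leaving only $p = 2$. For $m \geq 3$, the identity $\sum 1/c_i = 1/n$ together with $c_i > n$ forces at least three divisors of $n^2$ in $(n, n^2]$ to satisfy a tight reciprocal sum, and the Sylow bound combined with $p - 1 \mid n$ should preclude this.

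The main obstacle is the bookkeeping in the borderline subcase $m = 2$, $p = 3$, where several numerical solutions to the centralizer identity exist (for instance, pairs like $(c_1, c_2) = (9, 18)$ or $(12, 12)$ when $n = 6$) and must be eliminated structurally, typically by examining the action of $G$ on $\langle x_i\rangle$ and showing that a non-cyclic Sylow subgroup produces too many non-$p$-th powers while a cyclic one produces either the wrong number of conjugacy classes or forces $p \mid |Z(G)|$, which via Lemma~\ref{center} collapses to $p=2$.
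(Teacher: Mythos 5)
Your opening reduction is correct and is essentially the same first step as the paper's: picking class representatives, writing $|x_i^G|=n^2/c_i$, and summing over the classes of $\mathcal{N}_p(G)$ is equivalent to the paper's observation that some class satisfies $|x^G|\leq n/m$, i.e.\ that some $c_i\geq mn$. But from there the proposal has a genuine gap: you only ever establish a \emph{lower} bound on the centralizer orders ($c_i>n$ when $m\geq 2$), and the entire remainder is an open-ended case analysis that you yourself flag as unfinished (``I expect each surviving solution \dots to violate\dots'', ``should preclude this'', ``the main obstacle is the bookkeeping in the borderline subcase $m=2$, $p=3$''). Those borderline solutions, such as $(c_1,c_2)=(12,12)$ for $n=6$, are exactly the cases the lemma must kill, and no argument is actually given for them. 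The tools you propose for the purpose are too weak: applying Lemma~\ref{in and out} to $\langle x_i\rangle$ bounds $o(x_i)$, not $c_i=|\textbf{C}_G(x_i)|$, and applying Lemma~\ref{center} to $G$ itself requires $p\mid |\textbf{Z}(G)|$, which you have no way to arrange.

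The missing idea is to apply Lemma~\ref{center} not to $G$ but to the centralizer $C=\textbf{C}_G(x)$ of a representative $x$ of the smallest class: there $x$ is automatically central and $p\mid o(x)$, so Lemma~\ref{center} gives $|C|\leq \frac{p}{p-1}\,n_p(C)$, and Lemma~\ref{in and out} gives $n_p(C)\leq n_p(G)=n$. This is the \emph{upper} bound $c_i\leq \frac{p}{p-1}n$ that your reciprocal identity is missing. Combined with $|x^G|\leq n/m$ it yields
\[
n^2=|G|=|x^G|\,|C|\leq \frac{n}{m}\cdot\frac{p}{p-1}\,n,
\]
hence $(p-1)m\leq p$, which forces $m=1$ or ($p=2$ and $m=2$) with no case analysis at all. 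In particular all your numerically surviving pairs with $p\geq 3$ and $m=2$ die instantly, since they would require some $c_i\geq 2n>\frac{p}{p-1}n$. Without this step your plan does not constitute a proof.
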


\begin{proof}
There is some $x\in \mathcal{N}_p(G)$ with $|x^{G}|\leq n_p(G)/m$. 
Moreover, $x\in \textbf{Z}(\textbf{C}_G(x))$ and $p$ divides $o(x)$. We conclude that 
\[
n_p(G)^2=|G|=|x^{G}||\textbf{C}_G(x)|\leq \frac{n_p(G)}{m} \frac{p}{p-1} n_p(G).
\]

Hence $(p-1) m \leq p$ and we conclude that either $m=1$; or $p=2$ and $m=2$. 
\end{proof}

\begin{theorem}\label{only_one_class_theorem}
Let $G$ be a finite group with $n=n_p(G)>0$. If $\mathcal{N}_p(G)$ has length 1, then $|G|\neq n^2$. 
\end{theorem}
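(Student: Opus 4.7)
The plan is to assume for contradiction that $|G|=n^2$ and $\mathcal{N}_p(G)=x^G$ is a single conjugacy class, so that $|\textbf{C}_G(x)|=|x^G|=n$. First I would pin down $o(x)$: writing $o(x)=p^aj$ with $\gcd(j,p)=1$, the element $x^j$ is $p$-singular of order $p^a$ and hence has $p'$-part $1$, while $x$ itself has $p'$-part $j$; so Lemma~\ref{propagation} with $m=1$ forces $j=1$, i.e., $o(x)=p^a$ for some $a\geq 1$. A standard ``lifting'' argument---a $p$-element of order $p^b$ with $b>a$ would, since $\mathcal{N}_p(G)$ contains only order-$p^a$ elements, have to be a $p$-th power, producing a $p$-element of order $p^{b+1}$, and iterating contradicts finiteness---shows every $p$-element of $G$ has order dividing $p^a$, and hence (again since $\mathcal{N}_p(G)$ consists of the order-$p^a$ elements) that every element of order exactly $p^a$ lies in $x^G$.

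If $a=1$, then the set $\{g\in G:g^p=1\}$ equals $\{1\}\cup x^G$ and so has size $n+1$. Since $p=o(x)$ divides $|\textbf{C}_G(x)|=n$, in particular $p\mid|G|$, and Frobenius's theorem forces $p\mid n+1$; combined with $p\mid n$ this gives $p\mid 1$, a contradiction.

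If $a\geq 2$, set $y=x^{p^{a-1}}$, an element of order $p$. The map $\psi\colon x^G\to y^G$, $z\mapsto z^{p^{a-1}}$, is $G$-equivariant and surjective, so all its fibers share a common size $F$. The preimage of $y$ inside $\langle x\rangle$ consists of those $x^i$ with $i\equiv 1\pmod p$, of which there are exactly $p^{a-1}$ (each such $i$ is coprime to $p^a$, so $x^i$ is a generator of $\langle x\rangle$ and lies in $x^G$). Thus $F\geq p^{a-1}$ and $|\textbf{C}_G(y)|=nF\geq np^{a-1}$. Since $y$ is central of order $p$ in $\textbf{C}_G(y)$, Lemma~\ref{center} and Lemma~\ref{in and out} combine to give $|\textbf{C}_G(y)|\leq \frac{p}{p-1}n_p(\textbf{C}_G(y))\leq \frac{p}{p-1}n$. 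Chaining the two bounds yields $p^{a-2}(p-1)\leq 1$, which forces $p=2$ and $a=2$.

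In that final subcase every inequality must be tight: $|\textbf{C}_G(y)|=2n$ and $n_p(\textbf{C}_G(y))=n$. The equality case of Lemma~\ref{center} then tells us that $\textbf{C}_G(y)$ has a normal cyclic Sylow $2$-subgroup. But $p^a=4$ divides $n$, so $|\textbf{C}_G(y)|=2n$ is divisible by $8$, forcing this cyclic Sylow to have order at least $8$---impossible, since every $2$-element of $G$ has order dividing $p^a=4$. This contradiction completes the proof. I expect the main technical step to be the fiber argument, in particular verifying that the fibers all share a common size at least $p^{a-1}$; the final subcase then reduces cleanly via the equality case of Lemma~\ref{center}.
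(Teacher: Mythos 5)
Your proof is correct, but it takes a genuinely different route from the paper's. The paper's argument is short and structural: since $\mathcal{N}_p(G)$ has length $1$, Lemma \ref{propagation} forces $\textbf{C}_G(x)$ to be a $p$-group of order $p^j$, whence $n^2=|G|=n\cdot p^j$ gives $n=p^j$ and $|G|=p^{2j}$, so $G$ is a $p$-group; the lower bound $n\geq p^{2j}-p^{2j-2}$ for non-cyclic $p$-groups (Theorem \ref{p-groups-n}) then immediately yields a contradiction. You never observe that $G$ is a $p$-group; instead you pin down $o(x)=p^a$ and split on $a$. Your $a=1$ case, counting $\{g:g^p=1\}=\{1\}\cup x^G$ and playing $p\mid n+1$ (Frobenius) against $p\mid n$ (Lagrange in $\textbf{C}_G(x)$), is a clean and self-contained argument. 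Your $a\geq 2$ case, with the equivariant power map $x^G\to y^G$ giving $|\textbf{C}_G(y)|=nF\geq np^{a-1}$ against the ceiling $\frac{p}{p-1}n$ from Lemmas \ref{center} and \ref{in and out}, is essentially a hand-rolled version of what Lemma \ref{exponent} does later in the paper, and the equality analysis disposing of $p=2$, $a=2$ via the cyclic Sylow conclusion of Lemma \ref{center} is correct (since $4=o(x)\mid|\textbf{C}_G(x)|=n$, the cyclic Sylow $2$-subgroup of $\textbf{C}_G(y)$ would have order at least $8$, contradicting that all $2$-elements have order dividing $4$). All the supporting steps check out: the reduction $o(x)=p^a$ via Lemma \ref{propagation}, the lifting argument bounding orders of $p$-elements, and the fiber count $F\geq p^{a-1}$ from the $p^{a-1}$ generators $x^i$, $i\equiv 1\pmod p$, of $\langle x\rangle$. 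The trade-off: the paper's proof is shorter but leans on Theorem \ref{p-groups-n}; yours is longer but uses only the more elementary Lemmas \ref{propagation}, \ref{center}, \ref{in and out} together with Frobenius's solution theorem.
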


\begin{proof}
We first note that $n$ must be greater than 1, since $n=1$ implies that $G=C_2$ by Theorem \ref{bound}. 

By way of contradiction assume that $|G|=n^2$. There is an $x\in \mathcal{N}_p(G)$ of order $p^k$, for some positive integer $k$. Since the length of $\mathcal{N}_p(G)$ is $1$ we conclude that all non-$p$th powers in $G$ have order $p^k$. Consider $C=\textbf{C}_G(x)$. Lemma \ref{propagation} shows that $C$ is a $p$-group, or else the length of $\mathcal{N}_p(G)$ would be greater than 1. Let $|C|=p^{j}$ for some $j\geq k \geq 1$.

We now have that $|G|=n^2=|x^{G}|\cdot|C|=n\cdot p^{j}$. Hence $n=p^j$ and $G$ is a $p$-group of order $p^{2j}$. Theorem \ref{p-groups-n},  gives us 
\[
p^j=n\geq (p^2-1)p^{2j-2}.\]
Dividing both sides by $p^j$ gives us \[1 \geq (p^2-1)p^{j-2},\] and thus $j=1$ and we conclude that $|G|=p^2$. A contradiction to $|x^{G}|=n>1.$
\end{proof}

\begin{theorem}
If $G$ is a finite group and $|G|=n_k(G)^2$ for some $k$, then $G\cong C_4$. 
\end{theorem}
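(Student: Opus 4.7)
The plan is to pick up after Theorem \ref{only_one_class_theorem} and dispose of the only case left open by Lemma \ref{cases}, namely $p = 2$ with $\mathcal{N}_2(G)$ of length $m = 2$. By Corollary \ref{k-to-p} we may assume $|G| = n_p(G)^2$ for some prime $p$, and then Lemma \ref{cases} together with Theorem \ref{only_one_class_theorem} forces $p = 2$ and $m = 2$. Write $n = n_2(G)$.

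First I would read off every equality condition implicit in the proof of Lemma \ref{cases}. Tracking the chain, equality forces $|x^G| = n/2$ for the chosen representative, $|\textbf{C}_G(x)| = 2n$, and $n_2(\textbf{C}_G(x)) = n$. Setting $C = \textbf{C}_G(x)$, Lemma \ref{in and out} upgrades the last equality to $\mathcal{N}_2(C) = \mathcal{N}_2(G)$, and equality in Lemma \ref{center} hands us a normal cyclic Sylow $2$-subgroup $S$ of $C$, of some order $2^j$. Letting $T$ be a Hall $2'$-complement of $S$ in $C$ (Schur--Zassenhaus), the fact that $\operatorname{Aut}(S)$ has $2$-power order while $|T|$ is odd makes $T$ act trivially on $S$; thus $C = S \times T$ and $n = 2^{j-1}|T|$.

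Next I would invoke Corollary \ref{Sylow_restricts} to push every Sylow $2$-subgroup of $G$ into $C$, and hence into $S$ since $|T|$ is odd, so the Sylow $2$-subgroup of $G$ equals $S$ and has order $2^j$. But $|G| = n^2 = 2^{2j-2}|T|^2$ has $2$-part $2^{2j-2}$, forcing $j = 2$. So $G$ has cyclic Sylow $2$-subgroup $S$ of order $4$, and Burnside's normal $p$-complement theorem applies: $\operatorname{Aut}(C_4)$ has $2$-power order while $[N_G(S):S]$ is odd, so $N_G(S) = C_G(S)$, and $G = K \rtimes S$ with $|K|$ odd.

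To finish I would apply Lemma \ref{quotient} with the normal subgroup $K$. Since $G/K \cong C_4$ has $n_2 = 2$, we get $\tfrac{1}{2} \le n/n^2 = 1/n$, so $n \le 2$; meanwhile Lemma \ref{in and out} applied to $S \cong C_4$ gives $n \ge n_2(C_4) = 2$. Hence $n = 2$, $|G| = 4$, $K = 1$, and $G \cong C_4$. The main obstacle is carefully propagating the equality conditions through Lemmas \ref{cases}, \ref{center}, and \ref{in and out}; no individual step looks technically deep, though the appeal to Burnside is a modest piece of external input.
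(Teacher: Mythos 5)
Your argument is correct and follows the same skeleton as the paper's own proof: reduce to $p=2$ with $\mathcal{N}_2(G)$ of length $2$, extract the equality conditions $|x^G|=n/2$, $|C|=2n$, $n_2(C)=n$, obtain a normal cyclic Sylow $2$-subgroup of $C$ from the equality case of Lemma \ref{center}, promote it to a Sylow $2$-subgroup of $G$ via Corollary \ref{Sylow_restricts}, and force $j=2$ by comparing $2$-parts of $|G|=n^2$. You diverge in two local steps, both legitimately. First, where the paper invokes Lemma \ref{propagation} to conclude $|C|=2^jq^{\ell}$, you get the sharper structural statement $C=S\times T$ from Schur--Zassenhaus together with the fact that the odd-order group $T$ must act trivially on $S$ since $\operatorname{Aut}(S)$ is a $2$-group; this yields $n=2^{j-1}|T|$ just as cleanly. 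Second, where the paper finishes by observing that the normal $2$-complement commutes elementwise with the normal subgroup $\langle x\rangle$ and therefore lies in $C$ (forcing the odd part to vanish), you finish by applying Lemma \ref{quotient} to $G/K\cong C_4$ to get $n\le 2$ and Lemma \ref{in and out} to $C_4\le G$ to get $n\ge 2$. Both endgames are valid; yours avoids the ``normal subgroups of coprime order commute'' observation at the cost of citing Burnside. One remark: you could bypass Burnside entirely, since $S\trianglelefteq G$ and $[G:\textbf{C}_G(S)]$ divides both the odd number $[G:S]$ and $|\operatorname{Aut}(C_4)|=2$, so $S\subseteq\textbf{Z}(G)$ and Lemma \ref{center} applied to $G$ itself gives $n^2=|G|\le 2n$ immediately.
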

\begin{proof}
Let $G$ be a finite group satisfying $|G|=n_k(G)^2$ for some $k$. Then $|G|=n_p(G)^2$ for some prime $p$ dividing $k$ by Corollary \ref{k-to-p}. Furthermore by Lemma \ref{cases} and Theorem \ref{only_one_class_theorem}, we may assume that the length of $\mathcal{N}_k(G)$ is exactly 2, and that $p=2$. Let $n=n_2(G)$.

Let $x\in \mathcal{N}_2(G)$ such that $|x^{G}|$ is minimal, and write $C=\textbf{C}_G(x)$. We have

\[n^2=|G|=|x^{G}||C|\leq \frac{n}{2} 2 n_2(C))\leq n^2.\] Therefore, we must have the following equalities: $|x^{G}|=\frac{n}{2}$, $n_2(C)=n$, and $|C|=2 n$ for any $x\in \mathcal{N}_2(G)$.  By Lemma \ref{in and out}, we are guaranteed that $\mathcal{N}_2(C)=\mathcal{N}_2(G)$. Moreover, by Lemma \ref{center}, we know that the Sylow $2$-subgroup of $C$ is cyclic. Now, fix an $x\in \mathcal{N}_2(G)$ such that $o(x)=2^{j}$. Because $o(x)$ is $2^j$ and $x$ is not a square in $C$, we see that the Sylow $2$-subgroup of $C$ is generated by $x$ and has order $2^j$. By Corollary \ref{Sylow_restricts}, $\langle x \rangle$ is a Sylow subgroup of $G$.  Moreover, $\langle x \rangle$ is normal in $G$. 

Lemma \ref{propagation} further tells us that $C$ can be divisible by at most one odd prime. Let $|C|=2^{j} q^{\ell}$. Then $n=2^{j-1}q^{\ell}$ and $|G|=2^{2j-2} q^{2\ell}$. Since $\langle x \rangle$ is a Sylow $2$-subgroup of $G$, we see that $2^{2j-2}=2^{j}$ and thus $j=2$; moreover, $G$ has a cyclic Sylow $2$-subgroup and thus has a normal $2$-complement $H$. Since normal subgroups commute, $H\subset C=\textbf{C}_G(x)$. We conclude that $\ell=2\ell$. Hence $\ell=0$ and we conclude that $|G|=4$ and $G$ is cyclic. 
\end{proof}


\section{Proof of Theorem \ref{new_jumps}}\label{new_jumps-proof}

To prove Theorem \ref{new_jumps} we will first examine how the type of $\mathcal{N}_p(G)$ gives a bound on the order of $G$.  

\begin{lemma}\label{number_of_classes}
Let $G$ be a finite group and write $n=n_p(G)$ for a prime $p$. If $|G|>\frac{n^2}{2}$ and $m$ is the length of $\mathcal{N}_p(G)$, then either $m\leq 2$ or $p=2$ and $m=3.$
\end{lemma}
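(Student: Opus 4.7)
The plan is to mimic the argument used in Lemma \ref{cases}, which gave a sharp bound when $|G|=n^2$, and redo the inequality carefully with the weaker hypothesis $|G|>n^2/2$.

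First I would pick a conjugacy class in $\mathcal{N}_p(G)$ of minimal size: choose $x\in\mathcal{N}_p(G)$ with $|x^G|\leq n/m$. Set $C=\textbf{C}_G(x)$. Since $x\in \textbf{Z}(C)$ and $p$ divides $o(x)$, Lemma \ref{center} applies to $C$ and gives
\[
|C|\leq \frac{p}{p-1}\,n_p(C).
\]
By Lemma \ref{in and out}, $n_p(C)\leq n_p(G)=n$, so $|C|\leq \frac{p}{p-1}\,n$. Combining with $|G|=|x^G|\cdot|C|$ yields
\[
|G|\;\leq\;\frac{n}{m}\cdot\frac{p}{p-1}\cdot n \;=\;\frac{p\,n^2}{m(p-1)}.
\]

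Next I would feed in the hypothesis $|G|>n^2/2$ to obtain
\[
\frac{n^2}{2}\;<\;\frac{p\,n^2}{m(p-1)},
\]
which simplifies (as $n>0$) to $m(p-1)<2p$, i.e.
\[
m\;<\;\frac{2p}{p-1}.
\]

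Finally I would just read off the possible values of $m$. For $p=2$ the bound reads $m<4$, giving $m\leq 3$. For $p=3$ it reads $m<3$, so $m\leq 2$. For any prime $p\geq 5$ one has $2p/(p-1)\leq 5/2$, forcing $m\leq 2$. Thus unless $p=2$ and $m=3$, one has $m\leq 2$, which is the desired conclusion. I do not anticipate a serious obstacle: the only subtle point is the reduction from $n_p(C)$ down to $n_p(G)$ via Lemma \ref{in and out}, which is the inequality that allows the argument to run inside the centralizer rather than inside $G$ directly.
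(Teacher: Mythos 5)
Your proof is correct and follows essentially the same route as the paper: choose a class in $\mathcal{N}_p(G)$ of minimal size, bound $|\textbf{C}_G(x)|$ via Lemma \ref{center} together with $n_p(\textbf{C}_G(x))\leq n_p(G)$, and read off $m(p-1)<2p$. Your write-up actually makes explicit the appeal to Lemma \ref{in and out} that the paper leaves implicit.
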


\begin{proof}
There is some $x\in \mathcal{N}_p(G)$ with $|x^{G}|\leq n_p(G)/m$. 
Moreover, $x\in \textbf{Z}(\textbf{C}_G(x))$ and $p$ divides $o(x)$. We conclude that 
\[
\frac{n^2}{2}<|G|=|x^{G}||\textbf{C}_G(x)|\leq \frac{n}{m} \frac{p}{p-1} n.
\]

Hence $(p-1) m < 2 p$ and we conclude that either $m\leq2$; or $p=2$ and $m=3$. 
\end{proof}

\begin{lemma}\label{exponent}
Let $G$ be a finite group and $p$ a prime. If $G$ contains an element of order $p^k$ for $k>1$, then $|G|\leq \frac{n_p(G)^2}{p^{k-2}(p-1)}.$
\end{lemma}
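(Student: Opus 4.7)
The plan is to locate an element $x \in \mathcal{N}_p(G)$ of large $p$-power order, apply Lemma \ref{center} to its centralizer, and count conjugacy classes coming from generators of $\langle x \rangle$ and its $G$-conjugates. First, if $y \in G$ has order $p^k$ and $y = z^p$, a direct order computation forces $o(z) = p^{k+1}$; iterating, or equivalently taking $j$ maximal such that $G$ contains an element of order $p^j$, yields an element $x \in \mathcal{N}_p(G)$ of order $p^j$ with $j \ge k$. Since $n^2/(p^{j-2}(p-1)) \le n^2/(p^{k-2}(p-1))$ whenever $j \ge k$, it suffices to prove the sharper bound with $j$ replacing $k$.

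Now set $C = C_G(x)$, $N = N_G(\langle x \rangle)$, and $e = [N:C]$, which divides $|\mathrm{Aut}(\langle x \rangle)| = \varphi(p^j) = p^{j-1}(p-1)$. Because $x$ lies in the center of $C$ and has order divisible by $p$, Lemma \ref{center} combined with Lemma \ref{in and out} gives $|C| \le pn/(p-1)$. All $\varphi(p^j)$ generators of $\langle x \rangle$ lie in $\mathcal{N}_p(G)$, as do those of every $G$-conjugate $\langle x \rangle^g$; since distinct conjugate cyclic subgroups have disjoint generator sets, $[G : N] \cdot p^{j-1}(p-1) \le n$. Multiplying,
\[
|G| \;=\; [G:N] \cdot e \cdot |C| \;\le\; \frac{e\,p\,n^2}{p^{j-1}(p-1)^2},
\]
which equals the target $n^2/(p^{j-2}(p-1))$ exactly when $e \le p - 1$.

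The main obstacle is the case $e > p - 1$: here $p \mid e$, so $N$ contains a $p$-element $w \notin C$, and $\langle x, w \rangle$ is a non-cyclic $p$-subgroup of $G$ of order at least $p^{j+1}$ (non-cyclicity from the maximality of $j$, since a cyclic subgroup properly containing $\langle x \rangle$ would give an element of order exceeding $p^j$). Theorem \ref{p-groups-n} applied to $\langle x, w\rangle$ and Lemma \ref{in and out} then upgrade the lower bound on $n$ to $n \ge p^{j-1}(p-1)(p+1)$. To conclude, I would refine the class count by extracting further $G$-conjugacy classes in $\mathcal{N}_p(G)$ from the elements of $\langle x, w\rangle \setminus \langle x \rangle$, distinguishing them from the $\varphi(p^j)/e$ classes of generators of $\langle x \rangle$ by a centralizer or order argument, so that the final estimate absorbs the surplus factor $e/(p-1)$. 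Making this final step quantitative enough---particularly in the prime $p=2$ setting, where $e \le p-1$ reduces to the very restrictive $e = 1$---is the technically delicate part of the proof.
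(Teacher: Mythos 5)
Your setup is sound up to the point you yourself flag: the reduction to the maximal $p$-power order $p^j$, the observation that all generators of all conjugates of $\langle x\rangle$ lie in $\mathcal{N}_p(G)$ and are pairwise distinct (giving $[G:N]\,p^{j-1}(p-1)\leq n$), and the bound $|C|\leq pn/(p-1)$ via Lemmas \ref{center} and \ref{in and out} are all correct. But the proof is genuinely incomplete. The estimate $|G|=[G:N]\,e\,|C|$ only yields the claimed bound when $e=[N:C]\leq p-1$, and $e$ can be as large as $p^{j-1}(p-1)$, so a priori you are off by a factor of up to $p^{j-1}$. The sketched remedy does not close this gap: upgrading the lower bound on $n$ to $p^{j-1}(p-1)(p+1)$ via Theorem \ref{p-groups-n} cannot help on its own, because the target inequality compares $|G|$ to a constant multiple of $n^2$ and rescaling $n$ leaves the multiplicative surplus $e/(p-1)$ untouched; and it is not clear how harvesting extra classes from $\langle x,w\rangle\setminus\langle x\rangle$ would produce a saving of order $p^{j-1}$. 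There is, however, a one-line repair inside your framework: $N=N_G(\langle x\rangle)$ normalizes the characteristic subgroup $\langle x^{p^{j-1}}\rangle$ of order $p$, so $N\leq N_G(\langle x^{p^{j-1}}\rangle)$; the latter has index at most $p-1$ over $\textbf{C}_G(x^{p^{j-1}})$, which in turn has order at most $pn/(p-1)$ by Lemmas \ref{center} and \ref{in and out}. Hence $|N|\leq pn$ unconditionally, and $|G|=[G:N]\,|N|\leq \frac{n}{p^{j-1}(p-1)}\cdot pn=\frac{n^2}{p^{j-2}(p-1)}$, with no case split on $e$ and no special treatment of $p=2$.

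For comparison, the paper's proof avoids normalizers entirely. It lets $K$ be the normal subset of all elements of maximal $p$-power order $p^k$, notes $K\subseteq\mathcal{N}_p(G)$, and observes that the map $y\mapsto y^{p^{k-1}}$ carries $K$ onto a normal set of elements of order $p$ with every fiber of size at least $p^{k-1}$; hence the conjugacy class of $y^{p^{k-1}}$ has size at most $|K|/p^{k-1}\leq n/p^{k-1}$, while its centralizer has order at most $pn/(p-1)$ by Lemma \ref{center}. Multiplying class size by centralizer order gives the bound directly.
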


\begin{proof}
Let $S$ be a Sylow $p$-subgroup of $G$ and let $p^{k}$ be the exponent of $S$. Suppose that $k>1$. We will show that $|G|\leq \frac{n_p(G)^2}{p^{k-2}(p-1)}$.  

Let $K$ be the set of all elements of $G$ of order $p^{k}$. Then $K\subseteq \mathcal{N}_p(G)$ and is a normal subset of $G$. Consider the set $K^{p^{k-1}}$ of $p^{k-1}$ powers of elements of $K$. Let $\mu:G\rightarrow G$ take $x\rightarrow x^{p^{k-1}}$. For an element $y\in K$, we see that $\mu(y)\in K^{p^{k-1}}$; moreover, $\mu$ is at least $p^{k-1}:1$ from $\langle y \rangle$ to $y^{p^{k-1}}$. Hence $|K^{p^{k-1}}|\leq \frac{|K|}{p^{k-1}}$. Therefore 
\[
|G|=\left|(y^{p^{k-1}})^G\right|\cdot\left|\textbf{C}_G(y^{p^{k-1}})\right| \leq \frac{|K|}{p^{k-1}} \frac{p}{p-1}n_p(G)\leq \frac{n_p(G)^2}{p^{k-2}(p-1)}.
\]
\end{proof}

As seen in both Lemma \ref{number_of_classes} and \ref{exponent} the prime 2 is special and will often require a separate argument. Recall that the type of $\mathcal{N}_p(G)$ is a list of the orders of conjugacy classes in $\mathcal{N}_p(G)$. By combining Lemmas \ref{number_of_classes}, \ref{exponent}, and \ref{propagation} we can greatly restrict the type of $\mathcal{N}_p(G)$ in the case that $p$ is an odd prime and $|G|>\frac{n_p(G)^2}{2}$.

\begin{corollary}\label{odd-cases}
Let $G$ be a finite group and $p$ an odd prime dividing $|G|$. Write $n=n_p(G)$ and let $m$ be the number of conjugacy classes of $G$ contained in $\mathcal{N}_p(G)$. If $|G|>\frac{n^2}{2}$, then the type of $\mathcal{N}_p(G)$ is either $(p)$,$(p,p)$ or $(p,qp)$. 
\end{corollary}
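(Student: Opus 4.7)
The plan is to combine the three preceding lemmas in sequence. First, I would apply Lemma \ref{exponent} to bound the exponent of a Sylow $p$-subgroup: if some element of $G$ had order $p^k$ with $k \geq 2$, the lemma would give $|G| \leq n^2/(p^{k-2}(p-1)) \leq n^2/(p-1) \leq n^2/2$ using $p \geq 3$, contradicting the hypothesis $|G| > n^2/2$. Hence the Sylow $p$-subgroups have exponent $p$, and every $p$-singular element of $G$ has order $pj$ with $\gcd(j,p)=1$. A quick further observation (which I would verify directly) is that under this exponent restriction every $p$-singular element actually lies in $\mathcal{N}_p(G)$, since any $p$th root of such an element would have to have order divisible by $p^2$; so $\mathcal{N}_p(G)$ coincides with the set of all $p$-singular elements of $G$.

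Next I would invoke Lemma \ref{number_of_classes} with $p$ odd to conclude $m \leq 2$, and then apply Lemma \ref{propagation} to deduce that the set $X$ of $p'$-parts of orders of $p$-singular elements satisfies $|X| \leq m \leq 2$. Because $p \mid |G|$, Cauchy's theorem produces an element of order $p$, which lies in $\mathcal{N}_p(G)$ by the previous step, so $1 \in X$.

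The remaining work is a short case analysis. If $m = 1$ then $X = \{1\}$, all non-$p$th-powers have order $p$, and the type is $(p)$. If $m = 2$ with $X = \{1\}$ the same order constraint forces type $(p,p)$. The last case is $m = 2$ and $X = \{1, j\}$ with $j > 1$ coprime to $p$; here the two classes of $\mathcal{N}_p(G)$ must be one of order $p$ and one of order $pj$, so the type is $(p, pj)$, and the remaining task is to show $j$ is prime. I expect this primality check to be the main (and essentially only substantive) obstacle: the argument I have in mind is that any nontrivial factorization $j = ab$ with $1 < a < j$ yields, for any $x \in \mathcal{N}_p(G)$ of order $pj$, an element $x^a$ of order $pb$ with $1 < b < j$, contributing a third value to $X$ and violating $|X| \leq 2$. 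Hence $j$ must be prime, giving the type $(p, qp)$, and the corollary follows without further structural analysis.
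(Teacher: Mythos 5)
Your proposal is correct and follows exactly the route the paper intends: the corollary is stated there without a written proof, merely as "combining Lemmas \ref{number_of_classes}, \ref{exponent}, and \ref{propagation}," and your argument fills in precisely those details (exponent $p$ for the Sylow $p$-subgroup via Lemma \ref{exponent}, $m\leq 2$ via Lemma \ref{number_of_classes}, and the bound $|X|\leq m$ from Lemma \ref{propagation} to pin down the orders, with the power-of-$x$ trick forcing $j$ prime). No gaps; the identification of $\mathcal{N}_p(G)$ with the $p$-singular elements under the exponent restriction and the primality check are both handled correctly.
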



Of course there is a corresponding classification for the case $p=2$, but the parametrization of possible types is not as succinct. 

The following theorem of Frobenius will be used later to obtain appropriate bounds. A nice, self-contained proof can be found in a note by Isaacs and Robinson \cite{IR}.

\begin{lemma}[Frobenius's Solution Theorem]
If $m$ divides $|G|$, then $m$ divides \[|\{x\in G: x^m = 1\}|. \] 
\end{lemma}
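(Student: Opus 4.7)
The plan is to prove Frobenius's theorem by strong induction on $m$. The base case $m = 1$ is trivial. For the inductive step with $m > 1$ and $m \mid |G|$, the main combinatorial tool I would use is the action of the cyclic group $C_m = \langle \sigma \rangle$ on the set
\[
S = \{(g_1, g_2, \ldots, g_m) \in G^m : g_1 g_2 \cdots g_m = 1\}
\]
by cyclic shift $\sigma(g_1, \ldots, g_m) = (g_m, g_1, \ldots, g_{m-1})$; this is well-defined since $g_1 \cdots g_m = 1$ forces $g_m(g_1 \cdots g_{m-1}) = 1$, and a direct count gives $|S| = |G|^{m-1}$.

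For each divisor $d$ of $m$, the tuples fixed by $\sigma^d$ are those of period dividing $d$, determined by their first $d$ entries $(h_1, \ldots, h_d)$ subject to $(h_1 \cdots h_d)^{m/d} = 1$; hence there are $|G|^{d-1} f(m/d, G)$ such tuples, where $f(n, G) = |\{x \in G : x^n = 1\}|$. In particular the fixed points of the full $C_m$-action are the constant tuples $(g, \ldots, g)$ with $g^m = 1$, which number $f(m, G)$. Grouping the Burnside sum over $C_m$ by $\gcd(k, m)$, I obtain the identity
\[
m \;\Big|\; \sum_{e \mid m} \varphi(e)\, |G|^{m/e - 1}\, f(e, G).
\]
By the strong inductive hypothesis, $e \mid f(e, G)$ for every proper divisor $e$ of $m$. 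Isolating the $e = m$ term then produces a congruence controlling $\varphi(m)\, f(m, G)$ modulo $m$.

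The main obstacle is extracting $m \mid f(m, G)$ from this congruence, since $\gcd(\varphi(m), m)$ may exceed $1$ (already for $m = 4$). To overcome this I would first reduce to the prime-power case $m = p^k$, where the factor $|G|^{m/e - 1}$ in each summand with $e \mid p^k$ and $e < p^k$ is divisible by a large power of $p$ (since $p \mid |G|$), and combined with the inductive divisibility $p^j \mid f(p^j, G)$ most summands are absorbed modulo $p^k$; a secondary induction on $k$, using further Burnside-type identities at intermediate prime powers, should then upgrade the mod-$p$ divisibility that a single congruence provides to the full mod-$p^k$ divisibility required. The general case would follow by combining the prime-power divisibilities via the Chinese Remainder Theorem applied to the factorization $m = \prod p_i^{k_i}$. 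I expect the most delicate step to be closing this secondary induction in the prime-power case, where one must marshal several Burnside congruences in tandem to pin down $f(p^k, G)$ modulo $p^k$ rather than only modulo $p$.
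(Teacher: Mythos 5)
The paper does not actually prove this lemma; it quotes it and refers to Isaacs and Robinson \cite{IR}, whose self-contained proof is an induction on $|G|$ that uses genuine subgroup structure (counting solutions of $x^n=1$ inside subgroups and cosets), not a pure orbit-counting argument. Your cyclic-shift setup is correct as far as it goes: $|S|=|G|^{m-1}$, the fixed-point count $|G|^{d-1}f(m/d,G)$ for $\sigma^d$, and the congruence $m \mid \sum_{e\mid m}\varphi(e)\,|G|^{m/e-1}f(e,G)$ are all right. But the step you leave vague (``should then upgrade'') is exactly where the argument breaks, and it cannot be repaired within this framework. Every term with $e<m$ carries the factor $|G|^{m/e-1}$ with $m/e-1\geq 1$, hence is divisible by $|G|$ and therefore by $m$; so your congruence collapses, with no induction at all, to $\varphi(m)\,f(m,G)\equiv 0 \pmod{m}$. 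For $m=p^k$ this reads $p^{k-1}(p-1)f(p^k,G)\equiv 0\pmod{p^k}$, which yields only $p\mid f(p^k,G)$ --- Cauchy-strength information, and the most the shift action can give.

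The proposed secondary induction using the analogous Burnside identities at intermediate prime powers $p^j$ with $j<k$ fails for the same structural reason: each of those identities degenerates in the same way to $\varphi(p^j)f(p^j,G)\equiv 0\pmod{p^j}$, certifying a single factor of $p$ and nothing more, so there is no reservoir of extra congruences to draw on. (Your Chinese Remainder step has a separate flaw: $f(m,G)\neq\prod_i f(p_i^{k_i},G)$ for nonabelian $G$ --- e.g.\ $f(6,S_3)=6$ while $f(2,S_3)f(3,S_3)=12$ --- so divisibility of the prime-power counts does not combine to divisibility of $f(m,G)$; one must prove $p_i^{k_i}\mid f(m,G)$ directly, and the congruence at level $m$ again only delivers this up to the $\gcd(\varphi(m),m)$ obstruction.) To prove Frobenius's theorem you need a different engine: an induction on $|G|$ in which the extra powers of $p$ come from group-theoretic input (normal subgroups, Sylow theory, or characters), as in the cited Isaacs--Robinson note.
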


If $p$ is an odd prime then we have the following theorem classifying when the type of $\mathcal{N}_p(G)$ is $(p)$ in Corollary \ref{odd-cases}:

\begin{theorem} \label{odd-type-1}
Let $G$ be a finite group and $p$ an odd prime dividing the order of $G$. Write $n=n_p(G)$. If the type of $\mathcal{N}_p(G)$ is $(p)$ and $|G|\neq n(n+1)$, then $|G|\leq \frac{n(n+1)}{3}.$ 
\end{theorem}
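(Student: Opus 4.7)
The plan is to pin down the centralizer $C := \textbf{C}_G(x)$, where $x \in \mathcal{N}_p(G)$ has order $p$ and $n = |x^G|$, so that $|G| = n|C|$. My key claim is that $C$ must coincide with a Sylow $p$-subgroup of $G$; once this is in hand, Frobenius's solution theorem combined with Lemma \ref{divisible} will finish the theorem via a clean divisibility argument.

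First I would produce a Sylow $p$-subgroup $P$ with $x \in \textbf{Z}(P)$: any Sylow has a nontrivial center containing some element of order $p$, and since the type $(p)$ hypothesis forces all order-$p$ elements of $G$ to be $G$-conjugate to $x$, a suitable conjugate of any Sylow has $x$ as a central element. Thus $P \le C$. Next I would verify that every Sylow $p$-subgroup of $G$ has exponent $p$: otherwise an element $y$ of order $p^k$ with $k \ge 2$ would have to be a $p$-th power (else it contradicts the type $(p)$ assumption), so $y = z^p$ with $o(z) = p^{k+1}$, and iterating contradicts the finiteness of $G$. The heart of the argument is then to rule out any $p'$-contribution in $C$. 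Suppose for contradiction that $|C| > |P|$; by Cauchy's theorem $C$ contains an element $h$ of prime order $q \neq p$. Because $x$ and $h$ commute with coprime prime orders, $xh$ has order $pq \neq p$, so the type $(p)$ hypothesis forces $xh = z^p$ for some $z \in G$. A short order calculation then forces $o(z) = p^2 q$, whence $z^q$ has order $p^2$, contradicting the exponent-$p$ conclusion. Therefore $C = P$, and $|G| = np^a$ where $p^a := |P|$.

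I would then apply Frobenius's solution theorem to the divisor $p^a$ of $|G|$. Since Sylow $p$-subgroups have exponent $p$, the sets $\{y : y^{p^a} = 1\}$ and $\{y : y^p = 1\}$ coincide, and this set equals $\{1\} \cup \mathcal{N}_p(G)$ (every order-$p$ element lies in $\mathcal{N}_p(G)$, since a $p$-th root would have order $p^2$). This common set has $n + 1$ elements, so $p^a \mid n+1$; writing $n + 1 = p^a t$, I obtain $|G| = np^a = n(n+1)/t$.

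To finish, $t = 1$ corresponds exactly to $|G| = n(n+1)$, which by Theorem \ref{bound} is the Frobenius case excluded by hypothesis. The value $t = 2$ is impossible for odd $p$: it would give $n = 2p^a - 1$, and since $p \equiv 1 \pmod{p-1}$ one has $n \equiv 1 \pmod{p-1}$, contradicting Lemma \ref{divisible}'s requirement that $(p-1) \mid n$ unless $p = 2$. Hence $t \ge 3$, and $|G| = n(n+1)/t \le n(n+1)/3$. I expect the trickiest step to be establishing $C = P$: that equality is what converts the local type $(p)$ hypothesis into a global structural statement, and once it is in place everything reduces to routine Frobenius-style counting.
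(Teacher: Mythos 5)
Your proposal is correct and follows essentially the same route as the paper: identify $\textbf{C}_G(x)$ with a Sylow $p$-subgroup of order $p^a$, apply Frobenius's solution theorem to get $p^a\mid n+1$, and rule out $n+1=2p^a$ via Lemma \ref{divisible} since $p-1$ is even. You are somewhat more explicit than the paper about why the Sylow subgroups have exponent $p$ and why the solution set of $y^{p^a}=1$ equals $\{1\}\cup\mathcal{N}_p(G)$, but the argument is the same.
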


\begin{proof}
Let $x\in \mathcal{N}_p(G)$. By Lemma \ref{propagation}, $\textbf{C}_G(x)$ is a $p$-group and is contained in a Sylow $p$-subgroup of $S$. Since $\mathcal{N}_p(G)$ has type $(p)$ we know that all nontrivial elements of $S$ are in $\mathcal{N}_p(G)$ and hence conjugate to $x$. Let $y\in \textbf{Z}(S)\setminus 1$. Let $C=\textbf{C}_G(y)$. Since $y\in \mathcal{N}_p(G)$, we have that $C=S$. Let $|C|=|S|=p^k$. We know that $p^k$ divides $|G|$. By the theorem of Frobenius, $p^k|(n+1)$. We have 
\[
|G|=|x^{G}||C|=n\cdot p^k.
\]

If $p^k=n+1,$ then $|G|=n(n+1)$. Otherwise suppose $p^k=\frac{n+1}{2}$ and $n=2p^k-1$. By Lemma \ref{divisible}, we know that $n$ is divisible by $p-1$ which is even since $p$ is an odd prime; This contradicts $n=2p^{k}-1$. Therefore if $p^k\neq n+1$, then $p^k \leq \frac{n+1}{3}.$ 
\end{proof}

We note that 
\[
\frac{n(n+1)}{3} \leq \frac{n^2}{2},
\] when $n\geq 2$. When $n=1$, $|G|\leq 2$ by Theorem \ref{bound} and hence no odd primes divide $|G|$.

We now handle the two remaining cases in Corollary \ref{odd-cases}.

\begin{theorem} \label{odd-type-2}
Let $G$ be a finite group and $p$ an odd prime dividing the order of $G$. Write $n=n_p(G)$. Assume  $\mathcal{N}_p(G)$ has length 2. If $|G| > \frac{n^2}{2}$ then one of the following happens:
\begin{itemize}
\item The type of $\mathcal{N}_p(G)$ is $(p,p)$ and $|G|=\frac{n}{2}(n+1)$ and $G$ is a Frobenius group.
\item The type of $\mathcal{N}_p(G)$ is $(p,2p)$ and $|G|=\frac{n}{2}(n+2)$ and $G$ is a central extension of a Frobenius group of order $\frac{n}{2}(\frac{n}{2}+1)$. 
\end{itemize}
\end{theorem}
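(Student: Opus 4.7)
By Corollary \ref{odd-cases}, the hypotheses force the type of $\mathcal{N}_p(G)$ to be either $(p,p)$ or $(p,qp)$ for some integer $q\ge 2$ coprime to $p$. A uniform preliminary step applies Lemma \ref{exponent} with $k=2$: if $G$ contained an element of order $p^2$, we would have $|G|\le n^2/(p-1)\le n^2/2$ (since $p$ is odd), contradicting $|G|>n^2/2$. Hence no element of $G$ has order $p^2$, each Sylow $p$-subgroup has exponent $p$, and every element of order $p$ in $G$ belongs to $\mathcal{N}_p(G)$ (otherwise $h=g^p$ with $o(h)=p$ would force $o(g)=p^2$).

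For the type $(p,p)$ case, $\mathcal{N}_p(G)$ is exactly the set of elements of order $p$ in $G$, and by Lemma \ref{propagation} every $p$-singular element has order $p$. Pick $x\in\mathcal{N}_p(G)$ with $|x^G|$ minimal, so $|x^G|\le n/2$; since $\langle x\rangle$ is central of order $p$ in $C_G(x)$, Lemma \ref{center} combined with Lemma \ref{in and out} gives $|C_G(x)|\le pn/(p-1)$ and hence $|G|\le pn^2/(2(p-1))$. Frobenius's solution theorem applied with $m$ equal to the order of a Sylow $p$-subgroup $S$ shows that $|S|$ divides $1+n$. Combining $|G|>n^2/2$ with the centralizer bound and the divisibility $|S|\mid n+1$ pins down $|S|=n+1$, and a counting argument on the union of conjugates of $S$ then forces $S$ to be a normal, elementary abelian Sylow $p$-subgroup whose nonidentity elements split into the two classes $x^G,y^G$, each of size $n/2$. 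The equality $|C_G(x)|=n+1=|S|$ for nontrivial $x\in S$ is the Frobenius condition for the action of $G/S$ on $S$, yielding $G=S\rtimes H$ with $|H|=n/2$ and $|G|=(n+1)n/2$.

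For the type $(p,qp)$ case, the key additional task is to show $q=2$ and to identify the central extension structure. Take $y\in\mathcal{N}_p(G)$ with $o(y)=qp$; since $y^q$ has order $p$ and no order-$p$ element is a $p$th power, $y^q\in x^G$, and after conjugation I may assume $y^q=x$. Set $z:=y^p$, so $o(z)=q$, $z\in C_G(x)\cap C_G(y)$, and $z$ is a nontrivial $p$th power. Applying Lemma \ref{center} to $C_G(y)$ via the central subgroup $\langle x\rangle$ gives $|C_G(y)|\le pn/(p-1)$, and the divisibility $qp\mid|C_G(y)|$ yields $q\le n/(p-1)$. Next I would analyze the surjection $y^G\to z^G$ defined by $p$th powering, whose fibers have common size $|C_G(z)|/|C_G(y)|$, and combine this with Lemma \ref{in and out} applied to $C_G(z)$ to conclude $|z^G|=1$, i.e.\ $z$ is central in $G$. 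Once $z$ is central, Lemma \ref{quotient} gives $n_p(G/\langle z\rangle)\le n/q$ and $|G/\langle z\rangle|=|G|/q>n^2/(2q)$; applying Theorem \ref{bound} to $G/\langle z\rangle$ then forces $q=2$, because for $q\ge 3$ the resulting inequality $|G/\langle z\rangle|\le(n/q)((n/q)+1)$ conflicts with $|G/\langle z\rangle|>n^2/(2q)$ once the small cases are eliminated by the divisibility $p-1\mid n$ from Lemma \ref{divisible}. With $q=2$ and $z$ a central involution, the quotient $\bar{G}:=G/\langle z\rangle$ has $|\bar{G}|=(n/2)(n/2+1)$ and $n_p(\bar{G})=n/2$, placing $\bar{G}$ precisely in the exceptional Frobenius case of Theorem \ref{bound}; thus $\bar{G}$ is the advertised Frobenius group and $G$ is its central $\mathbb{Z}/2$-extension.

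The hard part is the $(p,qp)$ case, specifically establishing the centrality of $z=y^p$ in $G$ and excluding $q\ge 3$. Both steps call for a careful interplay between Lemmas \ref{quotient}, \ref{center}, \ref{in and out}, and \ref{divisible} together with Theorem \ref{bound} applied to $G/\langle z\rangle$, and I expect this to absorb most of the technical effort; the rest of the argument then reduces cleanly to recognizing a Frobenius structure via the quotient.
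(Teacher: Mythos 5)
Your handling of the $(p,p)$ case is essentially sound and matches the paper's argument (the paper gets $|\mathbf{C}_G(x)|\le n+1$ directly from $\mathbf{C}_G(x)$ being a $p$-group all of whose nontrivial elements lie in $\mathcal{N}_p(G)$, then identifies $\mathcal{N}_p(G)\cup\{1\}$ as a normal subgroup and applies Schur--Zassenhaus; your detour through Frobenius's solution theorem is harmless). The problem is the $(p,qp)$ case, where your entire chain of deductions --- $q=2$, the order formula $|G|=\frac{n}{2}(n+2)$, and the central-extension structure --- rests on first proving that $z=y^p$ is central in $G$, and for that step you offer only a plan, not an argument. The fiber size $|\mathbf{C}_G(z)|/|\mathbf{C}_G(y)|$ of the $p$th-power map $y^G\to z^G$, combined with Lemma \ref{in and out} applied to $\mathbf{C}_G(z)$, gives at best $|\mathbf{C}_G(z)|\le n\,|\mathbf{C}_G(y)|$, i.e.\ a \emph{lower} bound on $|z^G|$; and even the strongest conclusion available from Lemma \ref{in and out}, namely $\mathcal{N}_p(G)\subseteq\mathbf{C}_G(z)$, only shows $\mathbf{C}_G(z)\supseteq\langle\mathcal{N}_p(G)\rangle$, which in the target groups is a subgroup of order $n+2$, nowhere near all of $G$. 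So I do not see how to close this step with the tools you name, and since the exclusion of $q\ge 3$ is derived from passing to $G/\langle z\rangle$, the gap propagates through the rest of the case.

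The paper's route is essentially the reverse of yours and is where the real work lies: one studies the $q$th-power map $y^G\to x^G$, which is $j$-to-$1$ for some $j$ with $|x^G|=n/(j+1)$, and bounds $|\mathbf{C}_G(x)|\le n+j+1$ by counting the elements of each possible order $1,p,q,pq$ in $\mathbf{C}_G(x)$ (in particular, the elements $xs$ with $s\in\mathbf{C}_G(x)$ of order $q$ inject into the fiber over $x^q$, so there are at most $j$ such $s$). The inequality $\frac{n}{j+1}(n+j+1)\le\frac{n^2}{2}$ for $j\ge 2$ and $n>8$ forces $j=1$ (small orders, $|G|\le 56$, are checked by computer), and $j=1$ forces $q=2$ since otherwise $z\ne z^{-1}$ would give two elements of order $q$ in $\mathbf{C}_G(x)$. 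Only then does one get $|\mathbf{C}_G(x)|=n+2$, recognize $\mathbf{C}_G(x)=\langle\mathcal{N}_p(G)\rangle$ as normal, and deduce that its unique involution $z$ is central --- after which your quotient argument via Theorem \ref{bound} does correctly identify $\overline{G}$ as the exceptional Frobenius group. To repair your write-up you would need to supply this counting argument (or an equivalent) before invoking centrality of $z$, and to account for the finitely many small cases rather than deferring them to the divisibility $p-1\mid n$.
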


\begin{proof}
By Corollary \ref{odd-cases}, we know that the type of $\mathcal{N}_p(G)$ is either $(p,p)$ or $(p,pq)$ for $q$ a prime. We proceed by cases. 

Suppose that the type of $\mathcal{N}_p(G)$ is $(p,p)$. Let $x,y$ be elements of $\mathcal{N}_p(G)$ in different conjugacy classes. Without loss of generality assume that $|x^{G}|\leq |y^{G}|$, so $|x^{G}|\leq \frac{n}{2}$. By Lemma \ref{propagation}, we know that $C=\textbf{C}_G(x)$ is a $p$-group and thus $|C|\leq (n+1)$. Therefore:
\[|G|=|x^{G}||C|\leq |x^{G}| (1+n)\leq \frac{n}{2}(n+1).\]
If $|x^{G}|<\frac{n}{2}$, then since $n$ is even by Lemma \ref{divisible}, we know that $|x^{G}|\leq \frac{n}{2}-1$ and thus \[|G|\leq (\frac{n}{2}-1)(n+1)\leq \frac{n^2}{2}.\] Suppose $|x|=\frac{n}{2}$ and that $|C|<(n+1)$. Then $|C|\leq n$ and \[|G|\leq \frac{n^2}{2}.\] 

Hence if the type of  $\mathcal{N}_p(G)$ is $(p,p)$ and $|G|>\frac{n^2}{2}$ then $|G|=\frac{n}{2}(n+1)$. If we have $|G|=\frac{n}{2}(n+1)$, then for all $x\in \mathcal{N}_p(G)$ we have $|x^G|=\frac{n}{2}$ and $C=\textbf{C}_G(x)$ has order $n+1$. Moreover $C=\mathcal{N}_p(G)\cup 1$ is a normal subgroup of $G$. We further note that $n+1$ and $\frac{n}{2}$ are coprime, so by the Schur--Zassenhaus theorem $C$ has a complement in $G$. Since the centralizer of any nontrivial element of $C$ is contained in $C$, we see that $G$ is a Frobenius group with Frobenius kernel $C$ consisting of $\mathcal{N}_p(G)$ together with the identity. 

Suppose that the type of $\mathcal{N}_p(G)$ is $(p,pq)$ for some prime $q$. Let $x\in \mathcal{N}_p(G)$ have order $p$ and $y\in \mathcal{N}_p(G)$ have order $pq$. We note that $x^G$ contains all elements of $G$ of order $p$. Hence $y^q\in x^G$. Moreover, every $q$th power of an element of $y^G$ is in $x^G$. The $q$th power map from $y^G$ to $x^G$ is $j$ to $1$ for some positive integer $j$. Since $|y^G|+|x^{G}|=n$ we have 
\[
n=(j+1)|x^{G}| \hspace{2mm} \text{ and } \hspace{2mm} |x^{G}| = \frac{n}{j+1}. 
\] Now consider $C=\textbf{C}_G(x)$. Every element of $C$ has order $1,p,q$ or $pq$. We wish to bound the number of elements in $C$ of each of order. We note that there are exactly $j$ elements in $C$ of order $pq$ whose $q$th power is $x^q$. Moreover, any element $s\in C$ of order $q$ will satisfy $(xs)^q=x^q$. Hence there are at most $j$ elements $s\in C$ of order $q$. Since all elements of $y^{G}$ that have $x^q$ as their $q$-power commute with $x$, we conclude that there are exactly $j$ elements of order $q$ in $C$. We also know that there are at most $n$ elements total of orders $p$ and $pq$ in $C$. Hence $|C|\leq n+j+1$. We thus have
\[
|G|=|x^{G}||C|= \frac{n}{j+1}|C| \leq \frac{n}{j+1}(n+j+1).
\]

For $j>1$ and $n>8$, we have $\frac{n}{j+1}(n+j+1)\leq \frac{n^2}{2}$. Therefore if $|G|>\frac{n^2}{2}$ and $|G|>56$, we can assume that $j=1$. (For groups with order less than or equal to 56, we verified the theorem directly in Magma \cite{Magma}.) Since the map $q$th power map from $y^G$ to $x^G$ is 1:1, we can assume that $q=2$, otherwise $C$ would contain more than $j$ elements of order $q$. Hence the element of order $2$ in $C$ is central in $C$ (since there is only one such element). Therefore the number of elements of $G$ of order $p$ and $2p$ are equal and thus $n$ is even. Hence if $|C|<n+2$, then $|C|\leq n$ (since $|C|$ is even) and we have that 
\[
|G|=|x^{G}|C|\leq \frac{n^2}{2}.
\]

Therefore if $|G|>\frac{n^2}{2}$ and the type of $\mathcal{N}_p(G)$ is not $(p,p)$ then the type of $\mathcal{N}_p(G)$ is $(p,2p)$ and $|G|=\frac{n}{2}(n+2)$. Suppose $|G|=\frac{n}{2}(n+2)$ and let $x\in \mathcal{N}_p(G)$ satisfy $o(x)=p$. Let $C=\textbf{C}_G(x)$. Then $|C|=n+2$ and $C$ contains a unique involution $z$. Moreover, $C$ is normal, since it is generated by the normal set $\mathcal{N}_p(G)$. Hence $z$ is central in $G$, being the unique element of order $2$ in a normal subgroup of $G$. 

We now ask about the group $\overline{G}=G/\langle z \rangle.$ What is $n_p(\overline{G})$? It must be the case that $n_p(\overline{G})\leq n/2$. By Theorem \ref{bound}, we are guaranteed that $n_p(\overline{G})=\frac{n}{2}$ and $\overline{G}$ is a Frobenius group with kernel of order $\frac{n}{2}+1$ and complement of order $\frac{n}{2}$. Hence $G$ is a central extension of such a Frobenius group. 
\end{proof}

We can now prove Theorem \ref{new_jumps}:

\begin{proof}
By Corollary \ref{odd-cases} we can reduce to either the length of $\mathcal{N}_p(G)$ is 1 or 2. If the length of $\mathcal{N}_p(G)$ is 1, then Theorem \ref{odd-type-1} demonstrates that $|G|=n(n+1)$ or $|G|\leq \frac{n^2}{2}$. If the length of $\mathcal{N}_p(G)$ is 2, then by Theorem \ref{odd-type-2}, either $|G|\leq \frac{n^2}{2}$ or $G$ satisfies hypotheses (2) or (3) of the theorem.
\end{proof}

\section{Acknowledgements}
This material is based upon work supported by the National Science Foundation under Grant No. DMS-1502553. Some of the work was done while the author was visiting the Army Cyber Institute. The views expressed are those of the author and do not reflect the official policy or position of the Army Cyber Institute, West Point, the Department of the Army, the Department of Defense, or the US Government.

\bibliographystyle{alpha}
\newcommand{\etalchar}[1]{$^{#1}$}

\end{document}